\pgfplotsset{compat=1.18} 
\newtheorem{theorem}{Theorem}[section] 
\newtheorem{lemma}[theorem]{Lemma}
\theoremstyle{definition} 
\numberwithin{equation}{section} 
\title[Error estimates of fully semi-Lagrangian schemes]{Error estimates of fully semi-Lagrangian schemes for diffusive conservation laws} 
\author{Haruki Takemura} 
\email{takemura@ms.u-tokyo.ac.jp}
\thanks{This work was supported by JSPS KAKENHI Grant Number JP24KJ0964. } 
\date{} 
\def\norm#1#2{\left\|#2\right\|_{#1}} 
\def\snorm#1#2{\left|#2\right|_{#1}} 
\newcommand{\relmiddle}{\mathrel{}\middle|\mathrel{}} 
\begin{document} 

\begin{abstract} 
  We present error estimates of the fully semi-Lagrangian scheme with high-order interpolation operators, solving the initial value problems for the one-dimensional nonlinear diffusive conservation laws, including the Burgers equations. We impose certain assumptions on the interpolation operator, which are satisfied by both spline and Hermite interpolations. We establish the convergence rates of $ O(\Delta t + h^{2 s} / \Delta t) $ in the $ L^2 $-norm and $ O(\Delta t + h^{s} / (\Delta t)^{1/2} + h^{2s} / \Delta t) $ in the $ H^s $-norm for the spatial mesh size $ h $ and the temporal step size $ \Delta t $, where the spline or Hermite interpolation operator of degree $ (2s - 1) $ is employed. The numerical results are in agreement with the theoretical analysis. 
\end{abstract} 

\maketitle 

\section{Introduction} 

Semi-Lagrangian methods are widely used for the numerical approximation of conservation laws. One of the main advantages of the semi-Lagrangian framework is that it is free from the Courant--Friedrichs--Lewy condition, which allows for relatively large time steps and improved computational efficiency. As a result, semi-Lagrangian methods have been applied to a broad range of problems including atmospheric simulations \cite{91StCo} and plasma physics \cite{99SoRoBeGh}. 

In this paper, we consider the initial value problem of the nonlinear advection--diffusion equation in the periodic domain $ \mathbb{T} = \mathbb{R} / \mathbb{Z} $: 
\begin{align} \label{ad}
  \begin{cases} 
    \partial_t u + f(u) \, \partial_x u - \nu \, \partial_x^2 u = 0 & \text{in} \quad \mathbb{T} \times [0, T], \\
    u (0) = u_0 & \text{in} \quad \mathbb{T}, 
  \end{cases} 
\end{align} 
where $ u_0 $ is the given initial data, and $ \nu $ is a positive constant. The given flux $ f $ satisfies $ c_{f} \leq \snorm{}{f^\prime} \leq C_{f} $ for some positive constants $ c_{f}, C_{f} $. In this paper, we assume that $ f $ and $ u_0 $ are sufficiently smooth so that the problem \eqref{ad} admits a unique, sufficiently smooth solution $ u $. 

We now introduce the numerical scheme studied in this paper. Let $ N_t, N_x \in \mathbb{N} $, $ \Delta t = T / N_t $, $ t_n = n \Delta t $. Let $ 0 = x_0 < x_1 < \cdots < x_{N_x} = 1 $ denote a spatial grid on $ \mathbb{T} $. We define the mesh size as $ h = \max_{m} |x_{m} - x_{m-1}| $. The fully semi-Lagrangian scheme for \eqref{ad} proposed in \cite{25Fe} is given by 
\begin{gather} 
  \label{dSL-V} 
  \begin{cases} 
    u_m^0 = u_0 (x_m), \\ 
    \begin{aligned}
    u_m^{n} & = \frac{1}{2} \tilde{\mathcal{I}}_h [U^{n - 1}] \left( x_m - f (u_m^n) \Delta t - \sqrt{2} \delta \right) \\ 
    & \quad + \frac{1}{2} \tilde{\mathcal{I}}_h [U^{n - 1}] \left( x_m - f (u_m^n) \Delta t + \sqrt{2} \delta \right),   
    \end{aligned}
  \end{cases} 
\end{gather} 
where $ \delta = (\nu \, \Delta t)^{1/2} $, $ u_m^n $ denotes the approximation of $ u (x_m, t_n) $, $ U^n = (u_1^n, \ldots, u_{N_x}^n) $, and $ \tilde{\mathcal{I}}_{h} [U^n] $ is an interpolation of $ U^n $ that satisfies $ \tilde{\mathcal{I}}_{h} [U^n] (x_m) = u_m^n $ for all $ m \in \{1, \ldots, N_x\} $. The simplest choice for the interpolation operator $\mathcal{I}_h$ is the linear interpolation. To improve the accuracy, semi-Lagrangian schemes employing higher-order interpolation operators have been explored for several decades. For example, the use of cubic spline interpolation was proposed in \cite{76Pu}. 

We recall some notable features of the scheme~\eqref{dSL-V}. 

First, following the semi-Lagrangian approach, the solution at each grid point $ x_m $ is computed by tracing backward along the characteristic curve over the time interval $ [t_{n-1}, t_n] $. 

Second, the scheme is implicit in time: the velocity $ f(u) $ used for backtracking is evaluated at the unknown value $ u_m^n $. Although this implicit formulation increases the computational cost, it significantly improves the robustness of the method. Moreover, each nonlinear equation in the second line of \eqref{dSL-V} involves only a single scalar unknown $ u_m^n $. Thanks to this locality, the scheme can be efficiently implemented using a simple Newton iteration, and it is also appropriate for parallel computation. 

Third, the diffusive term is also treated in a semi-Lagrangian technique, and that is why this scheme is referred to as a fully semi-Lagrangian method. The derivation of this scheme is based on the Feynman--Kac formula. The idea of constructing deterministic numerical schemes for parabolic problems via stochastic representations is discussed in detail by~\cite{02Mi}. 

A mathematical analysis of the scheme was conducted in~\cite{25Fe}, established the following results: (i) an estimate of the consistency error, (ii) $ L^\infty $ -stability under the assumption that the interpolation operator does not increase the $ L^\infty $-norm, and (iii) one-sided Lipschitz stability under the condition that the interpolation operator does not increase the $ W^{1,\infty} $-seminorm. 

Schemes closely related to \eqref{dSL-V} have been already proposed in \cite{00MiTr,01MiTr,02Mi}. They share the same fundamental structure with \eqref{dSL-V}, but the implicit step is replaced by an explicit one. For these explicit versions, error estimates for the fully discrete scheme are established in the case of linear interpolation. See also \cite{00MiTr} for a similar implicit scheme applied to a broader class of PDEs with variable viscosity coefficients, as well as its $ L^\infty $-error estimate for a semi-discrete version. 

However, as noted in Remark 5.2 of \cite{02Mi}, a direct extension of their strategy to the case of higher-order spline interpolation is not straightforward. This difficulty is one of the main motivations for the present paper. In our study, we address the semi-Lagrangian methods with higher-order interpolation, including the spline interpolation of degree three and above. A crucial step in the analysis is to establish the stability of the interpolation operator with respect to both the standard $ H^s $-norm and the weighted $ H^s $-norm (defined in Subsection \ref{ss:pre}) for some $ s $. 

When the cubic Hermite interpolation operator is used in the semi-Lagrangian scheme, the resulting method is known as the cubic interpolated pseudo-particle (CIP) scheme, which is originally proposed in \cite{85TaNiYa}. Applications of the CIP scheme can be found in numerous works, including its use for the Korteweg--de Vries equations \cite{91YaAo}, and the shallow water equations \cite{09ToOgYa}. 

However, mathematical analyses of CIP schemes remain limited. The stability of the CIP scheme for the linear conservation law with a constant velocity was investigated in \cite{02NaTa, 15TaFuNiIs}. As stated in \cite{15TaFuNiIs}, it seems to be difficult to establish the $ L^2 $-stability of the CIP scheme. In our previous study \cite{24KaTa}, we derived the $ L^2 $-error estimate of order $ O(\Delta t^3 + h^4/\Delta t) $. In the present paper, we demonstrate that the higher-order Hermite interpolation operator can be employed as $ \mathcal{I}_h $, and that our error estimate also holds in this case. 

This paper is organized as follows. In Section~\ref{s:2}, we reformulate the scheme \eqref{dSL-V}, originally described in terms of grid functions, using a finite-dimensional subspace of $ H^s (\mathbb{T}) $. We also introduce some notation in this section. In Section~\ref{s:3}, we present the $ H^s $- and $ L^2 $-error analyses, which are the main results of this paper. In Section~\ref{s:4}, we present some numerical results. In the appendix, we provide auxiliary lemmas used in the proofs in Section~\ref{s:3}. 

\section{The fully semi-Lagrangian scheme for diffusive conservation laws} \label{s:2} 

\subsection{Reformulation of the semi-Lagrangian scheme} 

We recast the scheme \eqref{dSL-V}, originally formulated in terms of grid functions, as a scheme based on functions on $ \mathbb{T} $. For $ v: \mathbb{T} \to \mathbb{R} $, the approximate characteristics $ {X_{\Delta t}^1}[v] : \mathbb{T} \to \mathbb{T} $ is defined by 
\begin{align} 
  {X_{\Delta t}^1}[v] (x) = x - v(x) \Delta t. 
\end{align} 
Then we can rewrite \eqref{dSL-V} as 
\begin{align} 
  \label{SL-V} 
  \begin{cases} 
    u_h^0 = \mathcal{I}_h u_0, & \\ 
    u_h^{n} = \mathcal{I}_h \left[\mathcal{S}_{\Delta t}^{\mathrm{V}} (\mathcal{S}_{\Delta t}^{\mathrm{A}} u_h^{n - 1})\right] & (n = 1, \ldots, N_t), 
  \end{cases} 
\end{align} 
where the interpolation operator $ \mathcal{I}_h: C(\mathbb{T}) \to V_h^{2s-2, 2s-1} $ is defined by 
\begin{align} 
  \mathcal{I}_h v = \tilde{\mathcal{I}}_h [\left(v(x_1), \ldots, v(x_{N_x})\right)], 
\end{align} 
$ \mathcal{S}_{\Delta t}^{\mathrm{V}}: H^s (\mathbb{T}) \to H^s (\mathbb{T}) $ is defined by 
\begin{align} 
  \mathcal{S}_{\Delta t}^{\mathrm{V}} (v) & = \frac{1}{2} \left(v \circ {X_{\Delta t}^1}[\tilde{\delta}] + v \circ {X_{\Delta t}^1}[-\tilde{\delta}]\right) 
\end{align} 
for $ \tilde{\delta} = \sqrt{2} \delta / \Delta t $, and $ \mathcal{S}_{\Delta t}^{\mathrm{A}} : H^s (\mathbb{T}) \to H^s (\mathbb{T}) $ is implicitly defined by 
\begin{align} 
  \mathcal{S}_{\Delta t}^{\mathrm{A}} (v) & = v \circ {X_{\Delta t}^1} [f \circ \mathcal{S}_{\Delta t}^{\mathrm{A}} (v)]. \label{def_sa} 
\end{align} 
We can deduce that the operator $ \mathcal{S}_{\Delta t}^{\mathrm{A}} $ is well-defined when $ \Delta t $ is sufficiently small from Lemmas~\ref{lem:s} and \ref{lem:s2}. The schemes \eqref{dSL-V} and \eqref{SL-V} are equivalent in the sense that $ \mathcal{I}_h [U^{n}] = u_h^n $. 

In practice, we often use spline interpolation operator as $ \mathcal{I}_h $. We denote each cell by $ E_m = [x_m, x_{m + 1}] $. For $ k $, $ l \in \mathbb{N} \cup \{0\} $, we set the finite-dimensional function space 
\begin{align} 
  V_h^{k, l} \coloneqq \left\{v \in C^{k} (\mathbb{T}) \relmiddle v|_{E_m} \in \mathbb{P}_{l} (E_m), \quad m  = 1, \ldots, N_x \right\}, 
\end{align} 
where $ \mathbb{P}_{l} (E_m) $ is the set of all polynomials of degree at most $ l $ on $ E_m $. For $ v \in C (\mathbb{T}) $, the spline interpolation of degree $ (2 s - 1) $, denoted by $ \mathcal{I}_h^{\mathrm{S}} v $, is defined by $ \mathcal{I}_h^{\mathrm{S}} v \in V_h^{2s-2, 2s-1} $ and $ \mathcal{I}_h^{\mathrm{S}} v (x_m) = v (x_m) $. The condition on the spatial mesh ensuring the existence and uniqueness of the spline interpolation is discussed in Theorem 4 of \cite{64AhNiWa}. In this paper, we assume that the interpolation exists uniquely. 

\subsection{Semi-Lagrangian scheme with Hermite interpolation} 

In this subsection, we consider using the Hermite interpolation as the interpolation in the scheme \eqref{SL-V}. In the following, we see that the scheme is implementable with the Hermite interpolation. The $(2s-1)$-th degree Hermite interpolation operator $\mathcal{I}_h^{\mathrm{H}}: C^{s-1}(\mathbb{T}) \to V_h^{s-1, 2s-1}$ is defined by 
\begin{align} 
  \partial_x^k (\mathcal{I}_h^{\mathrm{H}} v) (x_j) = \partial_x^k v(x_j) \quad (k = 0,\ldots,s - 1). 
\end{align} 
The resulting semi-Lagrangian scheme reads 
\begin{align} 
  \begin{cases} 
    u_h^0 = \mathcal{I}_h^{\mathrm{H}} u_0, & \\ u_h^{n} = \mathcal{I}_h^{\mathrm{H}} \left[\mathcal{S}_{\Delta t}^{\mathrm{V}} (\mathcal{S}_{\Delta t}^{\mathrm{A}} u_h^{n - 1})\right] & (n = 1, \ldots, N_t). 
  \end{cases} \label{SL-V-Hermite} 
\end{align} 

To illustrate this concretely, let us consider the case $s = 2$. Let $ v_m^n $ approximate $ \partial_x u (x_m, t_n) $ and $ \tilde{U}^n $ denote the vector $ (u_1^n, \ldots, u_{N_x}^n, v_1^n, \ldots, v_{N_x}^n) $. We denote the cubic Hermite interpolation operator by $ \tilde{\mathcal{I}}_h^{\mathrm{H3}}: \mathbb{R}^{2N_x} \to V_h^{1, 3} $. In analogy to the expression of \eqref{dSL-V}, we can rewrite the corresponding grid function scheme as 
\begin{align} 
  u_m^{n} & = \frac{1}{2} \tilde{\mathcal{I}}_h^{\mathrm{H3}} [\tilde{U}^{n - 1}] \left(z_m^{n-}\right) + \frac{1}{2} \tilde{\mathcal{I}}_h^{\mathrm{H3}} [\tilde{U}^{n - 1}] \left( z_m^{n+} \right), \\ 
  v_m^n & = \frac{1}{2} \left[\partial_x \tilde{\mathcal{I}}_h^{\mathrm{H3}} [\tilde{U}^n] (z_m^{n-}) + \partial_x \tilde{\mathcal{I}}_h^{\mathrm{H3}} [\tilde{U}^n] (z_m^{n+})\right] \\ 
  & \quad \times \left[ 1 + \frac{\Delta t}{2} f^\prime (u_m^n) \left(\partial_x \tilde{\mathcal{I}}_h^{\mathrm{H3}} [\tilde{U}^{n-1}] (z_m^{n-}) + \partial_x \tilde{\mathcal{I}}_h^{\mathrm{H3}} [\tilde{U}^{n-1}] (z_m^{n+})\right) \right]^{-1}, \label{dSL-V-Hermite} 
\end{align} 
where $ z_m^{n\pm} = x_m - f (u_m^n) \Delta t \pm \sqrt{2} \delta $. The second equation in \eqref{dSL-V-Hermite} is obtained by differentiating the second line of \eqref{SL-V-Hermite}. As in the spline interpolation case, the implicit scheme \eqref{SL-V-Hermite} can also be implemented with Hermite interpolations of degree higher than the example above. 

In the remainder of this paper, we assume that $ \mathcal{I}_{h}: H^s(\mathbb{T}) \to H^s(\mathbb{T}) $ is a general interpolation operator that satisfies the following three properties: 
\begin{enumerate}[(P1)] 
  \item The operator $ \mathcal{I}_{h} $ satisfies the integral relation \label{p:o} 
  \begin{align} 
    \snorm{s, 2}{(I - \mathcal{I}_{h}) v - \mathcal{I}_{h} w}^2 & = \snorm{s, 2}{(I - \mathcal{I}_{h}) v}^2 + \snorm{s, 2}{\mathcal{I}_{h} w}^2 
  \end{align} 
  for any $ v, w \in H^s (\mathbb{T}) $, where we denote the identity operator by $ I $. 
  \item There exists a constant $ C_s > 0 $ such that for any $ v \in H^s (\mathbb{T}) $, \label{p:s}
  \begin{align}
    \norm{0, 2}{(I - \mathcal{I}_{h}) v} & \leq C_s h^s \snorm{s, 2}{v}. \label{interpolation_error_s}
  \end{align}
  \item For an integer $ q > s $, there exists a constant $ C_q > 0 $ such that for any $ v \in H^q (\mathbb{T}) $, \label{p:q} 
  \begin{align} 
    \snorm{s, 2}{(I - \mathcal{I}_{h}) v} & \leq C_q h^{q - s} \snorm{q, 2}{v}. 
  \end{align} 
\end{enumerate} 

We remark that the spline interpolation of degree $ (2 s - 1) $ and the Hermite interpolation of degree $ (2 s - 1) $ satisfy (P\ref{p:o}), (P\ref{p:s}) and (P\ref{p:q}) for $q = 2s$. These three facts can be found in Theorems 4, 7 and 9 in~\cite{SV67}, respectively. 

\subsection{Preliminaries} 
\label{ss:pre} 

We introduce some notation. The norms of Sobolev spaces $ L^p (\mathbb{T}) $ and $ W^{s, p} (\mathbb{T}) $ are denoted by $ \norm{0, p}{\, \cdot \,} $ and $ \norm{s, p}{\, \cdot \,} $, respectively. We also use the Sobolev seminorm $ \snorm{s, p}{\, \cdot \,} = \norm{0, p}{\partial_x^s (\, \cdot \,)} $. For a positive integer $ s $, we define 
\begin{align} 
  \norm{s, 2, \ast}{v} = \left(\norm{0,2}{v}^2 + \snorm{s,2}{v}^2\right)^{1/2}, 
\end{align} 
which is a norm equivalent to the standard $ H^s $-norm. We simply refer to $ \norm{s, 2, \ast}{\, \cdot \,} $ as the $ H^s $-norm, when there is no risk of confusion. We further define the norm $ \norm{s, 2, \Delta}{\,\cdot\,} $, which depends on the mesh size $ h $ and the time step size $ \Delta t $, by 
\begin{align} 
  \norm{s, 2, \Delta}{v} = \left(\norm{0, 2}{v}^2 + \frac{h^{2s}}{\Delta t} \snorm{s, 2}{v}^2 \right)^{1/2}, \label{nsd} 
\end{align} 
and refer to it as the weighted $ H^s $-norm. For a Sobolev space $ H^r (\mathbb{T}) $, we write $W^{k,p} (H^r)$ to denote the Bochner space $W^{k,p}(0,T; H^r(\mathbb{T}))$. 

We use a generic positive constant $ C $ that may change from line to line.  In addition, we use $ C_{\mathrm{P}} (A_1, \ldots, A_l) $ to denote a generic positive constant that depends polynomially on $ (A_1, \ldots, A_l) $. That is, there exists a polynomial $ \Phi (A_1, \ldots, A_l) $ such that 
\begin{align} 
  C_{\mathrm{P}} (A_1, \ldots, A_l) \leq \Phi (A_1, \ldots, A_l). 
\end{align} 

The following Sobolev embedding inequality will be used frequently. There exists a positive constant $ C $ such that 
\begin{align}
  \norm{0, \infty}{\phi} \leq C \norm{1, 2}{\phi}. \label{a23-emb} 
\end{align} 

\subsection{Well-definedness of $\mathcal{S}_{\Delta t}^{\mathrm{A}}$} \label{ss:2-3} In the following lemma, we show that the operator $ \mathcal{S}_{\Delta t}^{\mathrm{A}} $ is well-defined by \eqref{def_sa}, provided that $ \Delta t $ is sufficiently small. 

\begin{lemma} \label{lem:s} 
  Let $ v \in H^s (\mathbb{T}) $ and $ f \in W^{s, \infty} (\mathbb{T}) $ for an integer $ s \geq 2 $. We define $ \Delta t_1 = \min\{(3 \snorm{1, \infty}{f} \snorm{1, \infty}{v})^{-1}, 1\} $. If $ \Delta t < \Delta t_1 $, then there exists a unique $ w \in H^s (\mathbb{T}) $ such that 
  \begin{align} 
    w = v \circ {X_{\Delta t}^1}[f \circ w]. \label{a1} 
  \end{align} 
  Furthermore, $ {X_{\Delta t}^1}[f \circ w] $ is a bijection. 
\end{lemma}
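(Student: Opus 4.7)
The plan is to apply the Banach fixed-point theorem to the nonlinear operator $T: C(\mathbb{T}) \to C(\mathbb{T})$ defined by
\begin{equation}
  T(w)(x) := v \bigl( x - \Delta t \, f(w(x)) \bigr),
\end{equation}
and then to upgrade the regularity of the resulting fixed point from $C(\mathbb{T})$ to $H^s(\mathbb{T})$. Since $s \geq 2$, the one-dimensional Sobolev embedding gives $v \in C^1(\mathbb{T})$, so $\snorm{1, \infty}{v}$ is finite. A direct pointwise estimate yields
\begin{equation}
  | T(w_1)(x) - T(w_2)(x) | \leq \Delta t \, \snorm{1, \infty}{f} \, \snorm{1, \infty}{v} \, | w_1(x) - w_2(x) |,
\end{equation}
so by the hypothesis $\Delta t < \Delta t_1$ the operator $T$ is a contraction on $C(\mathbb{T})$ with ratio at most $1/3$. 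This delivers a unique continuous $w$ solving \eqref{a1}, and uniqueness in the smaller space $H^s(\mathbb{T}) \subset L^\infty(\mathbb{T})$ follows a fortiori.

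To upgrade $w$ to $H^s(\mathbb{T})$ I would differentiate \eqref{a1} and solve for the derivative. Writing $Y(x) = x - \Delta t f(w(x))$, the chain rule produces
\begin{equation}
  w'(x) = \frac{v'(Y(x))}{1 + \Delta t \, v'(Y(x)) \, f'(w(x))}.
\end{equation}
The denominator is bounded below by $1 - \Delta t \snorm{1, \infty}{v} \snorm{1, \infty}{f} > 2/3$, which both justifies the identity and yields the pointwise bound $\snorm{1, \infty}{w} \leq (3/2) \snorm{1, \infty}{v}$. A change of variables, using that $Y' > 1/2$, then gives $\norm{0, 2}{w'} \leq C \snorm{1, 2}{v}$, so $w \in H^1(\mathbb{T})$. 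Iterating the differentiation, $\partial_x^k w$ can be written as a rational expression in the quantities $\partial_x^j w$ ($j < k$), $(\partial_x^j v) \circ Y$ ($j \leq k$), and $(\partial_x^j f) \circ w$ ($j \leq k$), whose denominator is a positive power of $1 + \Delta t v'(Y) f'(w)$. Inducting on $k = 1, \ldots, s$, using $v \in H^s(\mathbb{T})$, $f \in W^{s,\infty}(\mathbb{T})$, and the change-of-variables estimate under $Y$, gives $w \in H^s(\mathbb{T})$.

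For bijectivity of ${X_{\Delta t}^1}[f \circ w]$, the bound $\snorm{1, \infty}{w} \leq (3/2) \snorm{1, \infty}{v}$ combined with $\Delta t < \Delta t_1$ gives $|\Delta t f'(w(x)) w'(x)| < 1/2$, so the characteristic map has derivative at least $1/2$ everywhere. Lifted to $\mathbb{R}$, the map $\tilde{X}(x) = x - \Delta t f(w(x))$ is strictly increasing and satisfies $\tilde{X}(x+1) = \tilde{X}(x) + 1$ because $f \circ w$ is $1$-periodic, and such a lift descends to a homeomorphism of $\mathbb{T}$.

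The main obstacle is making the bootstrap rigorous: one must justify the differentiation of \eqref{a1} (initially in the weak sense, and then classically once the appropriate regularity of $w$ has been established at each step) and keep careful track of the Faà di Bruno expansions of $\partial_x^k w$. The structural observation that every such expansion can be solved for the top-order $w$-derivative by isolating the factor $1 + \Delta t v'(Y) f'(w)$, and that the remaining terms reduce to compositions of $L^2$-functions with $Y$ — which are controlled by change of variables — is the technical crux.
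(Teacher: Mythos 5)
Your proposal is correct in substance but reaches existence and uniqueness by a different mechanism than the paper. You run a Banach fixed-point argument for $T(w) = v \circ X_{\Delta t}^1[f\circ w]$ on $C(\mathbb{T})$, with contraction ratio $\Delta t\,\snorm{1,\infty}{f}\snorm{1,\infty}{v} < 1/3$; the paper instead fixes each $x\in\mathbb{T}$ and applies the implicit function theorem to the scalar equation $G(\xi,x) = \xi - v(x - f(\xi)\Delta t) = 0$, using $\partial_\xi G \geq 2/3$, which is the same quantitative nondegeneracy condition packaged pointwise rather than in function space. After that the two arguments coincide: solving the differentiated identity for $\partial_x w$ with denominator $\gamma = 1 + \Delta t\,(f'\circ w)\,(\partial_x v)\circ X_{\Delta t}^1[f\circ w] \geq 2/3$, the bound $\snorm{1,\infty}{w} \leq \tfrac{3}{2}\snorm{1,\infty}{v}$, and $\partial_x X_{\Delta t}^1[f\circ w] \in [1/2, 3/2]$ giving bijectivity. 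Two remarks on what each route buys. First, the paper's IFT route delivers differentiability of $w$ essentially for free (the scalar IFT with $G$ of class $C^1$ yields $w \in C^1$ directly), whereas your route must separately justify differentiating the fixed-point identity; your suggested repair (Lipschitz bound for $w$ from the equation, then weak/a.e. differentiation, then upgrading via continuity of the right-hand side) works, but it is precisely the extra step the paper avoids. Second, your concern about the $H^s$ bootstrap via Fa\`a di Bruno and change of variables under $X_{\Delta t}^1[f\circ w]$ (with $\partial_x X_{\Delta t}^1[f\circ w] \geq 1/2$) is legitimate: the paper's proof of this lemma also does not carry it out, and the derivative estimates up to order $s$ are in fact performed in Lemma~\ref{lem:s2} in the appendix, whose structure (isolating the top-order term multiplied by $\gamma$, bounding the Bell-polynomial remainders in $L^\infty$, and using the change-of-variables bound for compositions in $L^2$) is exactly the induction you outline. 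So your plan is viable and, if the bootstrap were written out, would constitute a complete proof; it is marginally heavier on the differentiability justification but otherwise equivalent in content.
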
 

\begin{proof} 
  We define $ G: \mathbb{R} \times \mathbb{T} \to \mathbb{R} $ by 
  \begin{align}
    G (\xi, x) = \xi - v (x - f (\xi) \Delta t). 
  \end{align}
  It follows that 
  \begin{align}
    \partial_\xi G (\xi, x) = 1 - f^\prime (\xi) (\partial_x v) (x - f (\xi) \Delta t) \Delta t, 
  \end{align} 
  Suppose $ \Delta t \in (0, \Delta t_1) $. Then we have $ \partial_\xi G \geq 2 / 3 $. 
  By the implicit function theorem, for each $ x \in \mathbb{T} $, there exists a unique $ \xi(x) \in \mathbb{R} $ such that $ G (\xi(x), x) = 0 $. 
  This implies the unique existence of a function $ w: \mathbb{T} \to \mathbb{R} $ satisfying \eqref{a1}. 

  By differentiating \eqref{a1}, we have 
  \begin{align}
    \gamma \partial_x w = (\partial_x v) \circ {X_{\Delta t}^1} [f \circ w], \label{a10} 
  \end{align}
  where 
  \begin{align}
     \gamma = 1 + \left((f^\prime) \circ w\right) \left((\partial_x v) \circ {X_{\Delta t}^1} [f \circ w]\right) \Delta t. \label{a35} 
  \end{align}
  Since it holds that $ 1 / (1 - a) \leq 1 + (3/2)a $ for $ a \in (0, 1/3) $, when $ \Delta t < \Delta t_1 $, we have 
  \begin{align} 
    \norm{0, \infty}{1 / \gamma} & \leq \frac{1}{1 - \Delta t \snorm{1, \infty}{f} \snorm{1, \infty}{v}} \\
    & \leq 1 + \frac{3}{2} \Delta t \snorm{1, \infty}{f} \snorm{1, \infty}{v}. \label{a11}
  \end{align} 
  Taking the $ L^\infty $-norm of \eqref{a10}, and applying \eqref{a11}, we obtain 
  \begin{align} 
    \snorm{1, \infty}{w} 
    & \leq \norm{0, \infty}{1 / \gamma} \norm{0,\infty}{\gamma \, \partial_x w} \\
    & \leq \left(1 + \frac{3}{2} \snorm{1, \infty}{f} \snorm{1, \infty}{v} \Delta t\right) \snorm{1,\infty}{v}. \label{a5} 
  \end{align} 
  Combining with $ 3 \snorm{1,\infty}{f} \snorm{1,\infty}{v} \Delta t < 1 $, we have 
  \begin{align} 
    \snorm{1, \infty}{w} \leq \frac{3}{2} \snorm{1, \infty}{v}. \label{a5-2}
  \end{align}
  By \eqref{a5-2} and $ \Delta t \leq \Delta t_1 $, we have $ \Delta t \snorm{1, \infty}{f} \snorm{1, \infty}{w} \leq 1/2 $. Combining with 
  \begin{align} 
    \partial_x {X_{\Delta t}^1} [f \circ w] = 1 - (f^\prime \circ w) \partial_x w \Delta t, 
  \end{align} 
  we obtain 
  \begin{align} 
    \frac{1}{2} \leq \partial_x {X_{\Delta t}^1} [f \circ w] (x) \leq \frac{3}{2} 
  \end{align} 
  for any $ x \in \mathbb{T} $, which implies $ {X_{\Delta t}^1} [f \circ w]: \mathbb{T} \to \mathbb{T} $ is a bijection. 
\end{proof} 

\section{Main results} 
\label{s:3} 

\subsection{Consistency} 

In this section, we present the $ L^2 $-error estimate for the scheme \eqref{SL-V}. To this end, we first establish the $H^s$-error estimate. We begin by investigating the consistency of the scheme in the $ L^2 $-norm and the $ H^s $-seminorm. 

We denote the exact solution $ u (t_n) $ by $ u^n $. We define the truncation error function $ \tau_h^n: \mathbb{T} \to \mathbb{R} $ ($ n = 1, \ldots, N_t $) by 
\begin{align} 
  \tau_h^n & = \frac{1}{\Delta t} \left\{u^n - (\mathcal{S}_{\Delta t}^{\mathrm{V}} u^{n - 1}) \circ {X_{\Delta t}^1} [f \circ u^n]\right\} \\ 
  & = \frac{1}{\Delta t} \left\{u^n - \frac{1}{2} ( u^{n - 1} \circ {X_{\Delta t}^1} [f \circ u^n - \tilde{\delta}] + u^{n - 1} \circ {X_{\Delta t}^1} [f \circ u^n + \tilde{\delta}]) \right\}, \label{def_tau} 
\end{align} 
where the interpolation error is deliberately excluded from the definition. We will show that $ \tau_h^n $ is of order $ O(\Delta t) $ under suitable regularity assumptions. 

Suppose that $ u \in W^{2,\infty} (0, T; H^{s} (\mathbb{T})) \cap L^{\infty} (0, T; H^{s + 4} (\mathbb{T})) $. 
We expand $ u^{n - 1} \circ {X_{\Delta t}^1} [u^n \pm \tilde{\delta}] $ by Taylor's theorem and apply the first equality of \eqref{ad} to obtain 
\begin{align} 
  \tau_h^n & = \tau_{h1}^n + \tau_{h2}^n + \tau_{h3}^n + \tau_{h4+}^n + \tau_{h4-}^n, \label{t0}
\end{align} 
where 
\begin{align}
  \tau_{h1}^n & = \frac{1}{\Delta t} \left( u^{n - 1} - u^n \right) + \partial_t u^{n - 1}, \\
  \tau_{h2}^n & = u^{n - 1} \partial_x u^{n - 1} - u^n \partial_x u^{n - 1}, \\
  \tau_{h3}^n & = \Delta t \left\{\frac12 f(u^n)^2 \partial_x^2 u^{n - 1} - \left(\frac13 \Delta t f (u^n)^3 + 6 \nu f (u^n) \right) \partial_x^3 u^{n - 1}\right\} 
\end{align}
and 
\begin{align}
  \tau_{h4\pm}^n (x) & = - \frac{\Delta t}{6} \int_0^{f (u^n (x)) \sqrt{\Delta t} \mp \sqrt{2 \nu}} \partial_x^4 u^{n - 1} (x - \sqrt{\Delta t} z) z^3 \, \mathrm{d}z. 
\end{align}

Each term on the right-hand side of \eqref{t0} will be estimated below. By Taylor's theorem, we have 
\begin{align} 
  \tau_{h1}^n & = \frac{1}{\Delta t} \int_{0}^{\Delta t} t^\prime \partial_t^2 u (t^{n - 1} + t^\prime) \, \mathrm{d} t^\prime. 
\end{align} 
Therefore, we obtain 
\begin{align}
  \norm{0, 2}{\tau_{h1}^n} & \leq C \Delta t \norm{W^{2, \infty}(t_{n - 1}, t_n; L^2 (\mathbb{T}))}{u}, \label{t10}
\end{align}
and 
\begin{align}
  \snorm{s, 2}{\tau_{h1}^n} & \leq C \Delta t \norm{W^{2, \infty}(t_{n - 1}, t_n; H^s (\mathbb{T}))}{u}. \label{t1s}
\end{align}

By the fundamental theorem of calculus, we have 
\begin{align} 
  \tau_{h2}^n & = - \partial_x u^{n - 1} \int_{0}^{\Delta t} \partial_t u (t^{n - 1} + t^\prime) \, \mathrm{d} t^\prime.  
\end{align}
Thus, we obtain 
\begin{align}
  \norm{0, 2}{\tau_{h2}^n} & \leq C \Delta t \norm{1, \infty}{u^{n - 1}} \norm{W^{1, \infty}(t_{n - 1}, t_n; L^2 (\mathbb{T}))}{u} \label{t20}
\end{align}
and 
\begin{align}
  \snorm{s, 2}{\tau_{h2}^n} & \leq C \Delta t \norm{s + 1, \infty}{u^{n - 1}} \norm{W^{1, \infty}(t_{n - 1}, t_n; H^s (\mathbb{T}))}{u}. \label{t2s}
\end{align}

We can estimate $ \tau_{h3}^n $ as 
\begin{align}
  \norm{0, 2}{\tau_{h3}^n} & \leq C_{\mathrm{P}} (\norm{0,\infty}{f}, \norm{3,2}{u^{n-1}}) \Delta t \label{t30}
\end{align} 
and 
\begin{align}
  \snorm{s, 2}{\tau_{h3}^n} & \leq C_{\mathrm{P}} (\norm{s,\infty}{f}, \norm{s+3,2}{u^{n-1}}) \Delta t. \label{t3s}
\end{align} 

Finally, we estimate $ \tau_{h4\pm}^n $. Let 
\begin{align} 
  \theta_\pm (x) & = \begin{cases} 
    |x|^3 & (x_{\mathrm{min}}^\pm \leq x \leq x_{\mathrm{max}}^\pm), \\ 0 & \text{otherwise}. 
  \end{cases}
\end{align}
where $ x_{\mathrm{min}}^\pm = \min \{- \norm{0,\infty}{f} \sqrt{\Delta t} \mp \sqrt{2 \nu}, 0\} $ and $ x_{\mathrm{max}}^\pm = \max\{\norm{0,\infty}{f}\sqrt{\Delta t} \mp \sqrt{2 \nu}, 0\} $. 
Using Young's inequality and the bound $ \norm{0, 1}{\theta_\pm} \leq C $, we have 
\begin{align} 
  \norm{0, 2}{\tau_{h4\pm}^n} & \leq \frac{\Delta t}{6} \snorm{4, 2}{u^{n - 1}} \norm{0, 1}{\theta_\pm} \leq C \Delta t \snorm{4, 2}{u^{n - 1}}. \label{t40} 
\end{align} 
From a direct calculation, we also have 
\begin{align} 
  \snorm{s, 2}{\tau_{h4\pm}^n} & \leq C \Delta t \norm{s, 2}{\partial_x^4 u^{n - 1}}. \label{t4s} 
\end{align} 

By applying \eqref{t10}, \eqref{t20}, \eqref{t30} and \eqref{t40} to \eqref{t0}, we conclude that there exists a positive constant $ C_{\tau,0} $ depending on $ \norm{W^{2, \infty} (L^2)}{u} $ and $ \norm{L^{\infty} (H^4)}{u} $ such that 
\begin{align} 
  \norm{0, 2}{\tau_h^n} & \leq C_{\tau,0} \Delta t. \label{tau_0} 
\end{align} 
Similarly, by applying \eqref{t1s}, \eqref{t2s}, \eqref{t3s} and \eqref{t4s} to \eqref{t0}, we obtain a positive constant $ C_{\tau,s} $ depending on $ \norm{W^{2, \infty} (H^{s})}{u} $ and $ \norm{L^{\infty} (H^{s+4})}{u} $ such that 
\begin{align} 
  \snorm{s, 2}{\tau_h^n} & \leq C_{\tau,s} \Delta t. \label{tau_s} 
\end{align} 

\subsection{$ H^s $-stability} 

A single step of the scheme \eqref{SL-V} is carried out by applying the operators $\mathcal{S}_{\Delta t}^{\mathrm{V}}$, $\mathcal{S}_{\Delta t}^{\mathrm{A}}$, and $\mathcal{I}_{h}$ in sequence. In this subsection, we investigate the stability of the operators $\mathcal{S}_{\Delta t}^{\mathrm{V}}$ and $\mathcal{I}_{h}$, which are the linear operators among these three. The stability of $\mathcal{S}_{\Delta t}^{\mathrm{A}}$ will be addressed later in the proof of Theorem~\ref{thm:1}. 

First, we consider the stability of $ \mathcal{S}_{\Delta t}^{\mathrm{V}} $. Since $ X_{\Delta t}^1[\pm \tilde{\delta}] $ is simply a translation, we have 
\begin{align} 
  \norm{s, 2, \ast}{\mathcal{S}_{\Delta t}^{\mathrm{V}} v} \leq \frac{1}{2} \left(\norm{s, 2, \ast}{v \circ {X_{\Delta t}^1}[- \tilde{\delta}]} + \norm{s, 2, \ast}{v \circ {X_{\Delta t}^1}[+ \tilde{\delta}]}\right) = \norm{s, 2, \ast}{v} \label{b7} 
\end{align} 
and 
\begin{align} 
  \norm{s + 1, \infty}{\mathcal{S}_{\Delta t}^{\mathrm{V}} v} & \leq \frac{1}{2} \left(\norm{s + 1, \infty}{v \circ {X_{\Delta t}^1}[- \tilde{\delta}]} + \norm{s + 1, \infty}{v \circ {X_{\Delta t}^1}[+ \tilde{\delta}]}\right) \\ & = \norm{s + 1, \infty}{v}. \label{b7-2} 
\end{align} 

Next, we address the stability of the interpolation operator $ \mathcal{I}_{h} $ with respect to the $ H^s $-norm. 

\begin{lemma} \label{lem:2} 
  Suppose that the interpolation operator $ \mathcal{I}_{h} $ possesses the properties (P\ref{p:o}) and (P\ref{p:s}) for some integer $ s \leq 2 $. Furthermore, suppose that there exists a positive constant $ C_{\Delta 1} $ such that $ h^{2s} / \Delta t \leq C_{\Delta 1} $. Let $ C_I = (C_s)^2 C_{\Delta 1} $, where $ C_s $ is the constant introduced in (P\ref{p:s}). Then we have for any $ v \in H^s (\mathbb{T}) $ and $ (h, \Delta t) \in (0,1)^2 $
  \begin{align} 
    \norm{s, 2, *}{\mathcal{I}_{h} v} & \leq \left(1 + C_I \Delta t \right) \norm{s, 2, *}{v}. 
  \end{align} 
\end{lemma}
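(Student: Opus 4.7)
The plan is to bound $\norm{s, 2, *}{\mathcal{I}_h v}^2 = \norm{0, 2}{\mathcal{I}_h v}^2 + \snorm{s, 2}{\mathcal{I}_h v}^2$ by treating the $L^2$-norm and the $H^s$-seminorm pieces separately, using (P1) for the seminorm and (P2) for the $L^2$ part, and then combining the two via the hypothesis $h^{2s}/\Delta t \le C_{\Delta 1}$.

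First I would apply (P1) with $w = v$: the resulting identity $\snorm{s, 2}{(I - \mathcal{I}_h) v - \mathcal{I}_h v}^2 = \snorm{s, 2}{(I - \mathcal{I}_h) v}^2 + \snorm{s, 2}{\mathcal{I}_h v}^2$ polarises to the $L^2$-orthogonality $\int_{\mathbb{T}} \partial_x^s((I - \mathcal{I}_h) v) \, \partial_x^s(\mathcal{I}_h v) \, \mathrm{d} x = 0$. Decomposing $v = (I - \mathcal{I}_h) v + \mathcal{I}_h v$ and applying the Pythagorean identity in the $H^s$-seminorm inner product gives $\snorm{s, 2}{v}^2 = \snorm{s, 2}{(I - \mathcal{I}_h) v}^2 + \snorm{s, 2}{\mathcal{I}_h v}^2$, whence $\snorm{s, 2}{\mathcal{I}_h v} \le \snorm{s, 2}{v}$, with no $h$-dependent slack on the seminorm side.

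Next I would attack the $L^2$ part by writing $\mathcal{I}_h v = v - (I - \mathcal{I}_h) v$, so that the triangle inequality and (P2) give $\norm{0, 2}{\mathcal{I}_h v} \le \norm{0, 2}{v} + C_s h^s \snorm{s, 2}{v}$. Squaring this estimate, adding the seminorm bound, and expanding produces $\norm{s, 2, *}{\mathcal{I}_h v}^2 \le \norm{s, 2, *}{v}^2 + 2 C_s h^s \norm{0, 2}{v} \snorm{s, 2}{v} + C_s^2 h^{2s} \snorm{s, 2}{v}^2$. The hypothesis $h^{2s}/\Delta t \le C_{\Delta 1}$ provides $C_s^2 h^{2s} \le C_I \Delta t$, and a weighted Young inequality applied to the cross term together with this estimate will absorb the two extra contributions into $(2 C_I \Delta t + C_I^2 \Delta t^2) \norm{s, 2, *}{v}^2 = ((1 + C_I \Delta t)^2 - 1) \norm{s, 2, *}{v}^2$; taking a square root then delivers the claim.

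The hard part will be calibrating the Young parameter so that the absorption produces the linear order $\Delta t$ rather than the $\sqrt{\Delta t}$ that a naive split of the cross term would give. The orthogonality in (P1) is what makes this possible: it removes any $h$-dependent loss on the seminorm side, leaving only the cross term and the square $C_s^2 h^{2s} \snorm{s, 2}{v}^2$ to be absorbed using the single pair of quantities $(\Delta t, h^{2s}/\Delta t)$ supplied by the hypothesis.
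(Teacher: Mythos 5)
Your use of (P1) to obtain the Pythagorean identity $\snorm{s,2}{v}^2 = \snorm{s,2}{(I-\mathcal{I}_h)v}^2 + \snorm{s,2}{\mathcal{I}_h v}^2$ is fine, but the way you then decouple the two components loses exactly the structure that makes the lemma true, and your final absorption step does not go through. After bounding $\norm{0,2}{\mathcal{I}_h v} \le \norm{0,2}{v} + C_s h^s \snorm{s,2}{v}$ and $\snorm{s,2}{\mathcal{I}_h v} \le \snorm{s,2}{v}$, you are left with the cross term $2C_s h^s\norm{0,2}{v}\snorm{s,2}{v}$. The hypothesis only gives $C_s h^s \le (C_I\Delta t)^{1/2}$, so this term is of order $\Delta t^{1/2}$, and no calibration of the weighted Young inequality can split it into two pieces that are both $O(\Delta t)$: writing $2C_sh^s ab \le \epsilon a^2 + (C_s^2h^{2s}/\epsilon)b^2$, the product of the two coefficients is pinned at $C_s^2h^{2s}\le C_I\Delta t$, so they cannot both be of order $\Delta t$ unless $h^{2s}/\Delta t^{2}$ were bounded, which is not assumed. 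In fact your intermediate bound is genuinely too weak to imply the claim: taking $\norm{0,2}{v} = \snorm{s,2}{v} = 1$ and $h^{2s} = C_{\Delta 1}\Delta t$, the quantity $\bigl(\norm{0,2}{v} + C_sh^s\snorm{s,2}{v}\bigr)^2 + \snorm{s,2}{v}^2 = 2 + 2(C_I\Delta t)^{1/2} + C_I\Delta t$ exceeds $(1+C_I\Delta t)^2\norm{s,2,\ast}{v}^2 = 2(1+C_I\Delta t)^2$ for small $\Delta t$; from your chain only a factor $1 + O(\Delta t^{1/2})$ follows.

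The missing idea, which is how the paper argues, is to keep the orthogonality as a live identity rather than reduce it to the one-sided contraction $\snorm{s,2}{\mathcal{I}_h v}\le\snorm{s,2}{v}$, and to measure the $L^2$ loss by the \emph{error} seminorm instead of $\snorm{s,2}{v}$: since $\mathcal{I}_h\bigl((I-\mathcal{I}_h)v\bigr)=0$, substituting $(I-\mathcal{I}_h)v$ into (P2) gives $\norm{0,2}{(I-\mathcal{I}_h)v}\le C_sh^s\snorm{s,2}{(I-\mathcal{I}_h)v}$. Then split $\norm{0,2}{v+(\mathcal{I}_h-I)v}^2\le(1+d)\norm{0,2}{v}^2+(1+1/d)\norm{0,2}{(I-\mathcal{I}_h)v}^2$ with $d=C_I\Delta t$; because $(1+1/d)\,C_s^2h^{2s}\le 1+d$ under $h^{2s}/\Delta t\le C_{\Delta 1}$, the second term is at most $(1+d)\snorm{s,2}{(I-\mathcal{I}_h)v}^2$, and adding $\snorm{s,2}{\mathcal{I}_h v}^2$ and invoking (P1) reconstitutes $(1+d)\norm{s,2,\ast}{v}^2$ exactly, with no cross term ever appearing. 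In other words, the deficit $\snorm{s,2}{v}^2-\snorm{s,2}{\mathcal{I}_h v}^2=\snorm{s,2}{(I-\mathcal{I}_h)v}^2$ is precisely the budget that pays for the $L^2$-side loss; your scheme spends that budget prematurely, and without it the cross term cannot be absorbed at order $\Delta t$.
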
 

\begin{proof}
  It is straightforward to check that, for any $ a, b \geq 0 $ and $ d > 0 $, 
  \begin{align} 
    (a + b)^2 \leq (1 + d) a^2 + (1 + 1/d) b^2. \label{2-1} 
  \end{align} 
  This yields 
  \begin{align} 
    & \norm{s, 2, *}{\mathcal{I}_{h} v}^2 \\ & = \norm{0, 2}{v + (\mathcal{I}_{h} - I) v}^2 + \snorm{s, 2}{\mathcal{I}_{h} v}^2 \\ & \leq (1 + (C_s)^2 C_{\Delta 1} \Delta t) \norm{0, 2}{v}^2 \\ 
    & \quad + \left(1 + \frac{1}{(C_s)^2 C_{\Delta 1} \Delta t}\right) \norm{0, 2}{(\mathcal{I}_{h} - I) v}^2 + \snorm{s, 2}{\mathcal{I}_{h} v}^2. \label{stability_interpolation_1} 
  \end{align}  
  For $ \phi \in H^s (\mathbb{T}) $, substituting $ v = (I - \mathcal{I}_{h}) w $ in \eqref{interpolation_error_s}, we obtain 
  \begin{align} 
    \norm{0, 2}{(I - \mathcal{I}_{h}) w} \leq C_s h^s \snorm{s, 2}{(I - \mathcal{I}_{h}) w}. \label{2-2} 
  \end{align} 
  Using this, \eqref{stability_interpolation_1} becomes 
  \begin{align}
    & \norm{s, 2, *}{\mathcal{I}_{h} v}^2 \\ & \leq (1 + (C_s)^2 C_{\Delta 1} \Delta t) \norm{0, 2}{v}^2 + \frac{1 + (C_s)^2 C_{\Delta 1} \Delta t}{C_{\Delta 1}} \frac{h^{2s}}{\Delta t} \snorm{s, 2}{(I - \mathcal{I}_{h})v}^2 + \snorm{s, 2}{\mathcal{I}_{h} v}^2. 
  \end{align} 
  Under the assumption $ h^{2s} / \Delta t \leq C_{\Delta 1} $, we have 
  \begin{align} 
    \norm{s, 2, *}{\mathcal{I}_{h} v}^2 & \leq (1 + (C_s)^2 C_{\Delta 1} \Delta t) \norm{0, 2}{v}^2 \\ 
    & \quad + (1 + (C_s)^2 C_{\Delta 1} \Delta t) \snorm{s, 2}{(\mathcal{I}_{h} - I) v}^2 + \snorm{s, 2}{\mathcal{I}_{h} v}^2. 
  \end{align} 
  By (P\ref{p:o}), we have 
  \begin{align}
    \norm{s, 2, *}{\mathcal{I}_{h} v}^2 & \leq (1 + C_I \Delta t) \left(\norm{0, 2}{v}^2 + \snorm{s, 2}{(I - \mathcal{I}_{h}) v}^2 + \snorm{s, 2}{\mathcal{I}_{h} v}^2\right) \\
    & = (1 + C_I \Delta t) \left(\norm{0, 2}{v}^2 + \snorm{s, 2}{v}^2\right) \\
    & = (1 + C_I \Delta t)  \norm{s, 2, *}{v}^2, 
  \end{align}
  which concludes the proof. 
\end{proof} 

\subsection{$ H^s $-convergence} 

Before establishing the $L^2$-error estimate, it is necessary to derive an error bound in the $H^s$-norm. Although the $L^2$-convergence rate obtained directly from the following theorem is not as sharp as the one that will be proved in the next subsection, the $H^s$ boundedness of the numerical solution is essential for the $L^2$ analysis. 

\begin{theorem} \label{thm:1} 
  Suppose that the interpolation operator $ \mathcal{I}_{h} $ satisfies (P\ref{p:o}), (P\ref{p:s}) and (P\ref{p:q}) for positive integers $ s $ and $ q $. Furthermore, assume that there exist positive constants $ C_{\Delta 2} $ and $ \varepsilon $ such that 
  \begin{align} 
    h^{2s} / \Delta t \leq C_{\Delta 2} \quad \text{and} \quad h^{2(q - s)} / \Delta t^{1 + \varepsilon} \leq C_{\Delta 2}. \label{hyp:delta} 
  \end{align} 
  Then there exist $ (h_0, \Delta t_0) \in (0, 1)^2 $ and $ C_e > 0 $ which depends on $ \norm{W^{1,\infty}(H^{s + 2})}{u} $, $ \norm{L^{\infty} (H^{s + 4})}{u} $ and $ \norm{L^{\infty}(H^{q})}{u} $ such that, if $ (h, \Delta t) \in (0, h_0) \times (0, \Delta t_0) $, the following estimate holds: 
  \begin{align} 
    \norm{s, 2, \ast}{u^n - u_h^n} \leq C_e \left(\Delta t + \frac{h^{q - s}}{\Delta t^{1/2}} + \frac{h^q}{\Delta t}\right) \label{b30} 
  \end{align} 
  for all $ n \in \{0, \ldots, N_t\} $. 
\end{theorem}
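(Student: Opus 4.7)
The plan is to argue by induction on $ n $, maintaining simultaneously (i) the claimed error bound \eqref{b30} and (ii) an a priori bound on $ \norm{s, 2, \ast}{u_h^n} $ that is uniform in $ n $, $ h $, and $ \Delta t $. The second ingredient is needed in order to apply a Lipschitz-type estimate for the nonlinear operator $ \mathcal{S}_{\Delta t}^{\mathrm{A}} $ when its argument is the discrete iterate; it is maintained through Lemma~\ref{lem:2} together with the error bound itself. The first step is the natural splitting
\begin{align*}
  u^n - u_h^n = (I - \mathcal{I}_h) u^n + \mathcal{I}_h \bigl[ u^n - \mathcal{S}_{\Delta t}^{\mathrm{V}} \mathcal{S}_{\Delta t}^{\mathrm{A}} u_h^{n - 1} \bigr],
\end{align*}
so that the first piece is a pure interpolation error controlled by (P\ref{p:s}) and (P\ref{p:q}), while Lemma~\ref{lem:2} reduces the second piece to bounding $ \norm{s, 2, \ast}{u^n - \mathcal{S}_{\Delta t}^{\mathrm{V}} \mathcal{S}_{\Delta t}^{\mathrm{A}} u_h^{n - 1}} $ up to the factor $ (1 + C_I \Delta t) $.

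I would then separate consistency from propagation via
\begin{align*}
  u^n - \mathcal{S}_{\Delta t}^{\mathrm{V}} \mathcal{S}_{\Delta t}^{\mathrm{A}} u_h^{n - 1} = \bigl( u^n - \mathcal{S}_{\Delta t}^{\mathrm{V}} \mathcal{S}_{\Delta t}^{\mathrm{A}} u^{n - 1} \bigr) + \mathcal{S}_{\Delta t}^{\mathrm{V}} \bigl[ \mathcal{S}_{\Delta t}^{\mathrm{A}} u^{n - 1} - \mathcal{S}_{\Delta t}^{\mathrm{A}} u_h^{n - 1} \bigr].
\end{align*}
For the consistency piece I would bridge to $ \tau_h^n $ of \eqref{def_tau} by adding and subtracting $ (\mathcal{S}_{\Delta t}^{\mathrm{V}} u^{n - 1}) \circ X_{\Delta t}^1 [f \circ u^n] $; the leftover is estimated by Taylor-expanding $ u^{n - 1} $ about the exact characteristic foot and exploiting the cancellation between the symmetric $ \pm \tilde{\delta} $-shifts to knock the apparent $ O(\sqrt{\Delta t}) $ defect down to $ O(\Delta t^2) $, so that combined with \eqref{tau_0}--\eqref{tau_s} one obtains $ \norm{s, 2, \ast}{u^n - \mathcal{S}_{\Delta t}^{\mathrm{V}} \mathcal{S}_{\Delta t}^{\mathrm{A}} u^{n - 1}} \leq C \Delta t^2 $. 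For the propagation piece, the translation invariance \eqref{b7} of $ \mathcal{S}_{\Delta t}^{\mathrm{V}} $ combined with a Lipschitz estimate $ \norm{s, 2, \ast}{\mathcal{S}_{\Delta t}^{\mathrm{A}} u^{n - 1} - \mathcal{S}_{\Delta t}^{\mathrm{A}} u_h^{n - 1}} \leq (1 + C \Delta t) \norm{s, 2, \ast}{u^{n - 1} - u_h^{n - 1}} $ carries the previous error forward.

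The hardest step will be establishing this Lipschitz bound for $ \mathcal{S}_{\Delta t}^{\mathrm{A}} $. Because \eqref{def_sa} couples $ \mathcal{S}_{\Delta t}^{\mathrm{A}} v $ to its own composition, differentiating as in the proof of Lemma~\ref{lem:s} generates a chain of Jacobian and $ f' $ factors that must be controlled uniformly up to order $ s $, and comparing two applications of $ \mathcal{S}_{\Delta t}^{\mathrm{A}} $ forces one to estimate the difference of the two distinct characteristics through an implicit-function-type manipulation. This is precisely where the inductive $ H^s $-boundedness of $ u_h^{n - 1} $, lifted through the Sobolev embedding \eqref{a23-emb} to a $ W^{1, \infty} $-bound, enters the argument; I would isolate this estimate as an appendix lemma (the Lemma~\ref{lem:s2} alluded to in Section~\ref{ss:2-3}).

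Assembling the three components yields a recursion of the schematic form
\begin{align*}
  \norm{s, 2, \ast}{u^n - u_h^n} \leq (1 + C \Delta t) \norm{s, 2, \ast}{u^{n - 1} - u_h^{n - 1}} + C \Delta t^2 + C \cdot (\text{interp. per step}),
\end{align*}
where the per-step interpolation contribution has sharply different scales in its $ L^2 $- and $ H^s $-seminorm components; balancing these against the weighted norm $ \norm{s, 2, \Delta}{\cdot} $ of Section~\ref{ss:pre} produces the split $ h^{q - s} / \Delta t^{1/2} + h^q / \Delta t $ upon discrete Gr\"onwall summation over $ N_t = T / \Delta t $ steps. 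The thresholds $ h_0 $ and $ \Delta t_0 $ must be chosen small enough that the running error stays in the regime where the Lipschitz estimate is valid, closing the induction.
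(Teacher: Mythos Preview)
Your overall strategy---induction maintaining both the error bound and a uniform $H^s$-bound on $u_h^n$, splitting into an interpolation remainder plus a scheme-step remainder, and propagating via a Lipschitz property of $\mathcal{S}_{\Delta t}^{\mathrm{A}}$---matches the paper's. Your decomposition through the intermediate $\mathcal{S}_{\Delta t}^{\mathrm{V}}\mathcal{S}_{\Delta t}^{\mathrm{A}} u^{n-1}$ differs from the paper's (which never forms $\mathcal{S}_{\Delta t}^{\mathrm{A}}$ of the exact solution but instead compares evaluations of $\mathcal{S}_{\Delta t}^{\mathrm{V}} u^{n-1}$ at two characteristic feet, producing the terms $\eta_h^n$ and $\theta_h^n$), but the two are equivalent after an absorption step: your Lipschitz lemma for $\mathcal{S}_{\Delta t}^{\mathrm{A}}$ is precisely what the paper's $\eta_h^n$--$\theta_h^n$ estimates combine to give.

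There is, however, a genuine gap in how you treat the interpolation remainder $\rho_h^n = (I-\mathcal{I}_h)u^n$. Your linear recursion
\[
  \norm{s,2,\ast}{e_h^n} \leq (1+C\Delta t)\norm{s,2,\ast}{e_h^{n-1}} + C\Delta t^2 + \norm{s,2,\ast}{\rho_h^n},
\]
with $\norm{s,2,\ast}{\rho_h^n}\sim h^{q-s}$, yields after Gr\"onwall only $O(h^{q-s}/\Delta t)$, a full factor $\Delta t^{-1/2}$ worse than the $h^{q-s}/\Delta t^{1/2}$ in \eqref{b30}. Passing to the weighted norm $\norm{s,2,\Delta}{\cdot}$ does not repair this: that norm delivers the $L^2$-estimate of Theorem~\ref{thm:2}, not an $H^s$-bound, since recovering $\snorm{s,2}{\cdot}$ from $\norm{s,2,\Delta}{\cdot}$ costs an unbounded factor $\Delta t^{1/2}/h^{s}$. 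The paper avoids this loss by working at the level of squared norms and applying (P\ref{p:o}) \emph{directly} to $\snorm{s,2}{e_h^n}^2$, not merely inside Lemma~\ref{lem:2}: writing $e_h^n = (I-\mathcal{I}_h)u^n + \mathcal{I}_h[\cdots]$, property (P\ref{p:o}) gives the Pythagorean identity
\[
  \snorm{s,2}{e_h^n}^2 = \snorm{s,2}{\rho_h^n}^2 + \snorm{s,2}{\mathcal{I}_h[\cdots]}^2,
\]
so the per-step $s$-seminorm contribution is $\snorm{s,2}{\rho_h^n}^2 \sim h^{2(q-s)}$ \emph{without} a $1/\Delta t$ factor. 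Only the $L^2$-component of $\rho_h^n$ picks up $1/\Delta t$ through Young's inequality, giving $h^{2q}/\Delta t$ per step and hence $h^q/\Delta t$ after summation. This component-by-component treatment of $\norm{s,2,\ast}{\cdot}$, rather than lumping the two parts together via Lemma~\ref{lem:2} or the weighted norm, is the mechanism that produces the stated rate.
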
 

\begin{proof} 
  In this proof, we do not explicitly indicate the dependence of the generic constant $C_{\mathrm{P}}$ on $f$. 

  Let $ e_h^n = u^n - u_h^n $. For $ n = 0 $, we have 
  \begin{align}
    \norm{s, 2, \ast}{e_h^0} = \left(\norm{0, 2}{(I - \mathcal{I}_{h}) u^0}^2 + \snorm{s, 2}{(I - \mathcal{I}_{h}) u^0}^2\right)^{1/2}. 
  \end{align}
  By (P\ref{p:s}) and (P\ref{p:q}), there exists a positive constant $ C_0 $ depending on $ \snorm{q, 2}{u} $ such that 
  \begin{align} 
    \norm{s, 2, \ast}{e_h^0} \leq C_0 h^{q - s}. 
    \label{b0}
  \end{align}
  For $ n \geq 2 $, by the second equality of \eqref{SL-V}, we have 
  \begin{align} 
    e_h^n 
    & = u^n - \mathcal{I}_{h} (\mathcal{S}_{\Delta t}^{\mathrm{A}} (\mathcal{S}_{\Delta t}^{\mathrm{V}} u_h^{n - 1})) \\
    & = \mathcal{I}_{h} (u^n - \mathcal{S}_{\Delta t}^{\mathrm{A}} (\mathcal{S}_{\Delta t}^{\mathrm{V}} u_h^{n - 1})) + (I - \mathcal{I}_{h}) u^n. 
    \label{b6}
  \end{align} 
  By \eqref{def_tau}, we can decompose the first term in the right-hand side of \eqref{b6} as 
  \begin{align}
    u^n - \mathcal{S}_{\Delta t}^{\mathrm{A}} (\mathcal{S}_{\Delta t}^{\mathrm{V}} u_h^{n - 1}) = \eta_h^n + \theta_h^n + \Delta t \, \tau_h^n, \label{b9} 
  \end{align} 
  where 
  \begin{gather}
    \begin{aligned}
      \eta_h^n & = (\mathcal{S}_{\Delta t}^{\mathrm{V}} u^{n - 1}) \circ {X_{\Delta t}^1} [f \circ \mathcal{S}_{\Delta t}^{\mathrm{A}} (\mathcal{S}_{\Delta t}^{\mathrm{V}} u_h^{n - 1})] - \mathcal{S}_{\Delta t}^{\mathrm{A}} (\mathcal{S}_{\Delta t}^{\mathrm{V}} u_h^{n - 1}), 
    \end{aligned} \\ 
    \begin{aligned} 
      \theta_h^n & = (\mathcal{S}_{\Delta t}^{\mathrm{V}} u^{n - 1}) \circ {X_{\Delta t}^1} [f \circ u^n] - (\mathcal{S}_{\Delta t}^{\mathrm{V}} u^{n - 1}) \circ {X_{\Delta t}^1} [f \circ \mathcal{S}_{\Delta t}^{\mathrm{A}} (\mathcal{S}_{\Delta t}^{\mathrm{V}} u_h^{n - 1})], \label{b9-3} 
    \end{aligned} 
  \end{gather} 
  and $ \tau_h^n $ is defined by \eqref{def_tau}. We can rewrite $ \eta_h^n $ as 
  \begin{align}
    \eta_h^n & = (\mathcal{S}_{\Delta t}^{\mathrm{V}} u^{n - 1}) \circ {X_{\Delta t}^1} [f \circ \mathcal{S}_{\Delta t}^{\mathrm{A}} (\mathcal{S}_{\Delta t}^{\mathrm{V}} u_h^{n - 1})] \\ & \quad - (\mathcal{S}_{\Delta t}^{\mathrm{V}} u_h^{n - 1}) \circ {X_{\Delta t}^1} [f \circ \mathcal{S}_{\Delta t}^{\mathrm{A}} (\mathcal{S}_{\Delta t}^{\mathrm{V}} u_h^{n - 1})] \\
    & = (\mathcal{S}_{\Delta t}^{\mathrm{V}} e_h^{n - 1}) \circ {X_{\Delta t}^1} [f \circ \mathcal{S}_{\Delta t}^{\mathrm{A}} (\mathcal{S}_{\Delta t}^{\mathrm{V}} u_h^{n - 1})]. \label{b9-2} 
  \end{align}
  Let $ \rho_h^n = (I - \mathcal{I}_{h}) u^n $. Then \eqref{b6} becomes 
  \begin{align}
    e_h^n = \mathcal{I}_{h} (\eta_h^n + \theta_h^n + \Delta t \tau_h^n) + \rho_h^n. \label{b6-1}
  \end{align}
  By \eqref{p:o}, we have 
  \begin{align}
    \snorm{s, 2}{e_h^n}^2 & = \snorm{s, 2}{\mathcal{I}_{h} (\eta_h^n + \theta_h^n + \Delta t \tau_h^n)}^2 + \snorm{s, 2}{\rho_h^n}^2. \label{b6-2} 
  \end{align}
  By an elementary inequality
  \begin{align} 
    (a_1 + a_2 + a_3)^2 \leq (1 + \Delta t) a_1^2 + \frac{4}{\Delta t} (a_2^2 + a_3^2) 
  \end{align} 
   for $ a_1, a_2, a_3 \in \mathbb{R} $ and $ \Delta t \in (0, 1) $, \eqref{b6-2} becomes 
  \begin{align} 
    \snorm{s, 2}{e_h^n}^2 & = \left(1 + \Delta t \right) \snorm{s, 2}{\mathcal{I}_{h} \eta_h^n}^2 + \frac{4}{\Delta t} \left(\snorm{s, 2}{\mathcal{I}_{h} \theta_h^n}^2 + \snorm{s, 2}{\Delta t \mathcal{I}_{h} \tau_h^n}^2\right) + \snorm{s, 2}{\rho_h^n}^2. \label{b12-1}
  \end{align} 
  By \eqref{b6-1} and the inequality 
  \begin{align}
    (a_1 + a_2 + a_3 + a_4)^2 \leq (1 + \Delta t) a_1^2 + \frac{6}{\Delta t} (a_2^2 + a_3^2 + a_4^2) 
  \end{align} 
  for $ a_1, a_2, a_3, a_4 \in \mathbb{R} $ and $ \Delta t \in (0, 1) $, we have 
  \begin{align} 
    \norm{0, 2}{e_h^n}^2 & \leq (1 + \Delta t) \norm{0, 2}{\mathcal{I}_{h} \eta_h^n}^2 \\ & \quad + \frac{6}{\Delta t} \left(\norm{0, 2}{\mathcal{I}_{h} \theta_h^n}^2 + \norm{0, 2}{\Delta t \mathcal{I}_{h} \tau_h^n}^2 + \norm{0, 2}{\mathcal{I}_{h} \rho_h^n}^2\right). \label{b12-2} 
  \end{align} 
  Combining \eqref{b12-1} and \eqref{b12-2}, we obtain 
  \begin{align} 
    \norm{s, 2, \ast}{e_h^n}^2
    & \leq (1 + \Delta t) \norm{s, 2, \ast}{\mathcal{I}_{h} \eta_h^n}^2 \\ & \quad + \frac{6}{\Delta t} \left(\norm{s, 2, \ast}{\mathcal{I}_{h} \theta_h^n}^2 + \norm{s, 2, \ast}{\Delta t \mathcal{I}_{h} \tau_h^n}^2 + \norm{0, 2}{\rho_h^n}^2\right)+ \snorm{s, 2}{\rho_h^n}^2. 
  \end{align} 
  By Lemma~\ref{lem:2}, we have 
  \begin{align}
    \norm{s, 2, \ast}{e_h^n}^2 & \leq (1 + C \Delta t) \norm{s, 2, \ast}{\eta_h^n}^2 + \frac{6}{\Delta t} \norm{s, 2, \ast}{\theta_h^n}^2 \\
    & \quad + 6 \Delta t \norm{s, 2, \ast}{\tau_h^n}^2 + \frac{6}{\Delta t} \norm{0, 2}{\rho_h^n}^2 + \snorm{s, 2}{\rho_h^n}^2. \label{b12}
  \end{align} 

  We estimate $ \norm{s, 2, \ast}{\eta_h^n}^2 $. By Lemma~\ref{lem:s2} and \eqref{b7}, we have 
  \begin{align} 
    \norm{2, s, \ast}{\mathcal{S}_{\Delta t}^{\mathrm{A}} (\mathcal{S}_{\Delta t}^{\mathrm{V}} u_h^{n - 1})} & \leq \left(1 + C_{\mathrm{P}} (\norm{s, 2, \ast}{\mathcal{S}_{\Delta t}^{\mathrm{V}} u_h^{n - 1}}) \Delta t\right) \norm{s, 2, \ast}{\mathcal{S}_{\Delta t}^{\mathrm{V}} u_h^{n - 1}} \\ 
    & \leq C_{\mathrm{P}} (\norm{s, 2, \ast}{u_h^{n - 1}}), \label{b8} 
  \end{align} 
  where we used $ \Delta t \leq 1 $. By Lemmas~\ref{lem:c} and \ref{lem:a1}, we have 
  \begin{align}
    \norm{s, 2, \ast}{\eta_h^n} 
    & = \norm{s, 2, \ast}{(\mathcal{S}_{\Delta t}^{\mathrm{V}} e_h^{n - 1}) \circ {X_{\Delta t}^1} [f \circ \mathcal{S}_{\Delta t}^{\mathrm{A}} (\mathcal{S}_{\Delta t}^{\mathrm{V}} u_h^{n - 1})]} \\
    & \leq \left(1 + C_{\mathrm{P}} (\norm{s, 2, \ast}{f \circ (\mathcal{S}_{\Delta t}^{\mathrm{A}} \mathcal{S}_{\Delta t}^{\mathrm{V}} u_h^{n - 1})})  \Delta t\right) \norm{s, 2, \ast}{\mathcal{S}_{\Delta t}^{\mathrm{V}} e_h^{n - 1}}. 
  \end{align} 
  Using \eqref{b7} and \eqref{b8}, we have 
  \begin{align} 
    \norm{s, 2, \ast}{\eta_h^n} & \leq \left(1 + C_{\mathrm{P}} (\norm{s, 2, \ast}{u_h^{n - 1}})  \Delta t\right) \norm{s, 2, \ast}{e_h^{n - 1}}. \label{b2} 
  \end{align} 

  Second, we estimate $ \norm{s, 2, \ast}{\tau_h^n} $. By \eqref{tau_0} and \eqref{tau_s}, we have 
  \begin{align} 
    \norm{s, 2, \ast}{\tau_h^n} \leq C_{\mathrm{P}} (\norm{W^{2, \infty}(H^{s})}{u}, \norm{L^{\infty}(H^{s+4})}{u}) \Delta t. \label{b28}
  \end{align} 

  Third, we estimate $ \frac{6}{\Delta t} \norm{0,2}{\rho_h^n}^2 + \snorm{s,2}{\rho_h^n}^2 $. 
  By (P\ref{p:s}) and (P\ref{p:q}), we have 
  \begin{align}
    \frac{6}{\Delta t} \norm{0, 2}{\rho_h^n}^2 + \snorm{s, 2}{\rho_h^n}^2 \leq C\left(\frac{h^{2q}}{\Delta t} + h^{2 (q - s)}\right)\snorm{q, 2}{u}^2. \label{b31}
  \end{align}

  Now we estimate $ \norm{s, 2, \ast}{\theta_h^n} $. By \eqref{b9-3} and \eqref{d4}, we have 
  \begin{align} 
    \norm{s, 2, \ast}{\theta_h^n} 
    & \leq C_{\mathrm{P}} (\norm{s + 1, \infty}{\mathcal{S}_{\Delta t}^{\mathrm{V}} u^{n - 1}}, \norm{s, 2}{u^n}, \norm{s, 2}{\mathcal{S}_{\Delta t}^{\mathrm{A}} (\mathcal{S}_{\Delta t}^{\mathrm{V}} u_h^{n - 1})}) \\ & \qquad \cdot \Delta t
    \norm{s, 2, \ast}{u^n - \mathcal{S}_{\Delta t}^{\mathrm{A}} (\mathcal{S}_{\Delta t}^{\mathrm{V}} u_h^{n - 1})}. \label{b10-4} 
  \end{align} 
  By \eqref{b8} and \eqref{b7-2}, this becomes 
  \begin{align} 
    & \norm{s, 2, \ast}{\theta_h^n} \\ & \leq C_{\mathrm{P}} (\norm{L^\infty (W^{s + 1, \infty})}{u}, \norm{s, 2}{u_h^{n - 1}}) \Delta t \norm{s, 2, \ast}{u^n - \mathcal{S}_{\Delta t}^{\mathrm{A}} (\mathcal{S}_{\Delta t}^{\mathrm{V}} u_h^{n - 1})}.  \label{b10} 
  \end{align} 
  By \eqref{b9} and \eqref{b10}, there exists a constant $ C_{\mathrm{P}1} $ which depends polynomially on $ \norm{L^\infty (W^{s + 1, \infty})}{u} $ and $ \norm{s, 2, \ast}{u_h^{n-1}} $ such that 
  \begin{align}
    \norm{s, 2, \ast}{\theta_h^n} \leq C_{\mathrm{P}1} \Delta t \left(\norm{s, 2, \ast}{\rho_h^n} + \norm{s, 2, \ast}{\theta_h^n} + \Delta t \norm{s, 2, \ast}{\tau_h^n}\right). 
    \label{b10-2}
  \end{align}
  Let $ \Delta t_2 = \min \{1/(2 C_{\mathrm{P}1}), 1\} $ and suppose $ \Delta t \leq \Delta t_2 $. 
  Then we have $ 1 / (1 - C_{\mathrm{P}1} \Delta t) \leq 1 + 2 C_{\mathrm{P}1} \Delta t $. 
  Therefore, \eqref{b10-2} becomes 
  \begin{align}
    \norm{s, 2, \ast}{\theta_h^n} \leq C_{\mathrm{P}1} \Delta t (1 + 2 C_{\mathrm{P}1} \Delta t) \left(\norm{s, 2, \ast}{\eta_h^n} + \Delta t \norm{s, 2, \ast}{\tau_h^n}\right). 
    \label{b13}
  \end{align}
  Applying \eqref{b2}, \eqref{b31} and \eqref{b13} to \eqref{b12}, we obtain 
  \begin{align} 
    \norm{s, 2, \ast}{e_h^n}^2 & \leq \left(1 + C_{\mathrm{P}} (\norm{L^{\infty} (W^{s + 1, \infty})}{u}, \norm{s, 2, \ast}{u_h^{n - 1}} ) \Delta t \right) \norm{s, 2, \ast}{e_h^{n - 1}}^2 \\ & \quad + C_{\mathrm{P}} (\norm{L^{\infty} (W^{s + 1, \infty})}{u}, \norm{s, 2, \ast}{u_h^{n - 1}}) \Delta t \norm{s, 2, \ast}{\tau_h^{n}}^2 \\ & \quad + C\left(\frac{h^{2q}}{\Delta t} + h^{2 (q - s)}\right)\snorm{q, 2}{u}^2. \label{b20} 
  \end{align} 
  By \eqref{hyp:delta}, we estimate the last term as 
  \begin{align} 
    \left(\frac{h^{2q}}{\Delta t} + h^{2 (q - s)}\right)\snorm{q, 2}{u}^2 \leq (C_{\Delta 2}^2 + C_{\Delta 2}) \Delta t^{1 + \varepsilon}\snorm{q, 2}{u}^2. \label{b27} 
  \end{align} 
  Applying \eqref{b28} and \eqref{b27} to \eqref{b20}, we obtain 
  \begin{align} 
    \norm{s, 2, \ast}{e_h^n}^2 & \leq \left(1 + C_{\mathrm{P}} (\norm{L^{\infty} (W^{s + 1, \infty})}{u}, \norm{s, 2, \ast}{u_h^{n - 1}} ) \Delta t \right) \norm{s, 2, \ast}{e_h^{n - 1}}^2 \\ 
    & \quad + C_{\mathrm{P}} (\norm{W^{2, \infty}(H^{s})}{u}, \norm{L^{\infty}(H^{s+4})}{u}, \norm{s, 2, \ast}{u_h^{n - 1}}) \Delta t^3 \\ & \quad + C \Delta t^{1 + \varepsilon}\snorm{q, 2}{u}^2. \label{b18}
  \end{align} 
  Thus, there exist positive constants $ \alpha_1 $ depending on $ \norm{L^\infty(W^{s+1, \infty})}{u} $ and $ \beta_1 $ depending on $ \norm{W^{2, \infty}(H^s)}{u} $, $ \norm{L^{\infty}(H^{s + 4})}{u} $ and $ \norm{L^{\infty}(H^q)}{u} $ such that, if $ \norm{s, 2, \ast}{u_h^{n - 1}} \leq 2 \sup_{t\in (0, T)} \norm{s, 2, \ast}{u (t)} $, then 
  \begin{align} 
    \norm{s, 2, \ast}{e_h^n}^2 & \leq \left(1 + \alpha_1 \Delta t \right) \norm{s, 2, \ast}{e_h^{n - 1}}^2 + \beta_1 \Delta t^{1 + \varepsilon}. \label{b15} 
  \end{align} 
  Let positive constants $ \Delta t_3 $ and $ h_2 $ satisfy 
  \begin{align}
    \beta_1 T \mathrm{e}^{\alpha_1 T} (\Delta t_3)^\varepsilon \leq \frac{1}{2} \sup_{t\in (0, T)} \norm{s, 2, \ast}{u (t)} \label{b16} 
  \end{align} 
  and 
  \begin{align} 
    C_0 \mathrm{e}^{\alpha_1 T} (h_0)^{2(q - s)} \leq \frac{1}{2} \sup_{t\in (0, T)} \norm{s, 2, \ast}{u (t)}, \label{b17} 
  \end{align} 
  where $ C_0 $ is the constant introduced in \eqref{b0}. 
  
  Let $ \Delta t_0 = \min\{\Delta t_1, \Delta t_2, \Delta t_3, 1\} $ and $ h_0 = \min \{h_2, 1\} $. In the following, suppose that $ (\Delta t, h) \in (0, \Delta t_0) \times (0, h_0) $. We will show 
  \begin{align}
    \norm{s, 2, \ast}{u_h^n} \leq 2 \sup_{t\in (0, T)} \norm{s, 2, \ast}{u (t)}
    \label{b14}
  \end{align}
  for $ n \in \{0, \ldots, N_t\} $ by induction. 

  By \eqref{b0} and \eqref{b17}, we can easily check \eqref{b14} for $ n = 0 $ as 
  \begin{align} 
    \norm{s, 2, \ast}{u_h^0} \leq \norm{s, 2, \ast}{u^0} + \norm{s, 2, \ast}{e_h^0} \leq \norm{s, 2, \ast}{u^0} + C_0 h^{2 (q - s)} \leq \frac{3}{2} \sup_{t\in (0, T)} \norm{s, 2, \ast}{u (t)}. 
  \end{align}
  Suppose that, for some $ k \in \{0, \ldots, N_t - 1\} $, \eqref{b14} holds for $ n = 0, \ldots, k $. 
  Then by \eqref{b15}, we have 
  \begin{align} 
    \norm{s, 2, \ast}{e_h^{n + 1}}^2 & \leq \left(1 + \alpha_1 \Delta t \right) \norm{s, 2, \ast}{e_h^n}^2 + \beta_1 \Delta t^{1 + \varepsilon} 
  \end{align} 
  for all $ n = 0, \ldots, k $. From this, it follows that 
  \begin{align}
    \norm{s, 2, \ast}{e_h^{k + 1}}^2 
    & \leq \left(1 + \alpha_1 \Delta t \right)^{k + 1} \norm{s, 2, \ast}{e_h^0}^2 
    + \sum_{n = 0}^{k} \left(1 + \alpha_1 \Delta t \right)^{n} \beta_1 \Delta t^{1 + \varepsilon}. 
    \label{b29}
  \end{align}
  Applying \eqref{b0} and the inequality 
  \begin{align}
    \left(1 + \alpha_1 \Delta t \right)^{n} \leq \left(1 + \alpha_1 \Delta t \right)^{T / \Delta t} \leq \mathrm{e}^{\alpha_1 T} \quad (0 \leq n \leq N_t)
  \end{align}
  to \eqref{b29}, we obtain 
  \begin{align} 
    \norm{s, 2, \ast}{e_h^{k + 1}}^2 
    & \leq C_0 \mathrm{e}^{\alpha_1 T} h^{2 (q - s)}
    + \beta_1 T \mathrm{e}^{\alpha_1 T} \Delta t^{\varepsilon}. 
  \end{align} 
  Using \eqref{b16} and \eqref{b17}, we have 
  \begin{align} 
    \norm{s, 2, \ast}{e_h^{k + 1}}^2 \leq \sup_{t\in (0, T)} \norm{s, 2, \ast}{u (t)}. 
  \end{align} 
  Thus, we obtain 
  \begin{align} 
    \norm{s, 2, \ast}{u_h^{k + 1}} \leq \norm{s, 2, \ast}{u^{k + 1}} + \norm{s, 2, \ast}{e_h^{k + 1}} \leq 2 \sup_{t\in (0, T)} \norm{s, 2, \ast}{u (t)}. 
  \end{align} 
  By induction, \eqref{b14} holds for all $ n = 0,\ldots, N_t $. 

  Using \eqref{b28} and \eqref{b14}, \eqref{b20} can be rewritten as 
  \begin{align} 
    \norm{s, 2, \ast}{e_h^n}^2 & \leq \left(1 + \alpha_1 \Delta t \right) \norm{s, 2, \ast}{e_h^{n - 1}}^2 + C \left(\Delta t^3 + \frac{h^{2q}}{\Delta t} + h^{2 (q - s)}\right), 
  \end{align} 
  where $ C $ depends on $ \norm{W^{2, \infty}(H^{s})}{u} $, $ \norm{L^{\infty}(H^{s+4})}{u} $ and $ \norm{L^{\infty}(H^{q})}{u} $. From this, we can deduce 
  \begin{align} 
    \norm{s, 2, \ast}{e_h^n}^2 & \leq \left(1 + \alpha_1 \Delta t \right)^{n + 1} \norm{s, 2, \ast}{e_h^0}^2 + \sum_{k = 0}^{n} C \left(\Delta t^3 + h^{2 (q - s)} + \frac{h^{2q}}{\Delta t}\right) \left(1 + \alpha_1 \Delta t \right)^k \\ 
    & \leq C_0 \mathrm{e}^{\alpha_1 T} h^{2 (q - s)} + C T \mathrm{e}^{\alpha_1 T} \left(\Delta t^2 + \frac{h^{2 (q - s)}}{\Delta t} + \frac{h^{2q}}{\Delta t^2}\right), 
  \end{align} 
  which implies \eqref{b30}. 
\end{proof} 

\subsection{Stability and convergence with respect to the weighted $ H^s $-norm} 

We recall that $ \norm{s, 2, \Delta}{\, \cdot \,} $ is defined by \eqref{nsd}. We can establish the stability of $ \mathcal{S}_{\Delta t}^{\mathrm{V}} $ with respect to $ \norm{s, 2, \Delta}{\, \cdot \,} $ as 
\begin{align} 
  \norm{s, 2, \Delta}{\mathcal{S}_{\Delta t}^{\mathrm{V}} v} & \leq \frac{1}{2} \left(\norm{s, 2, \Delta}{v \circ {X_{\Delta t}^1}[- \tilde{\delta}]} + \norm{s, 2, \Delta}{v \circ {X_{\Delta t}^1}[+ \tilde{\delta}]}\right) \\ & = \norm{s, 2, \Delta}{v}. \label{b7-3} 
\end{align} 

We can also investigate the stability of $ \mathcal{I}_{h} $ in the following lemma. 
The case for cubic spline or Hermite interpolation is already stated in Lemmas 3.6 and 4.4 in~\cite{24KaTa}. 
\begin{lemma} \label{lem:w} 
  Suppose that the interpolation operator $ \mathcal{I}_{h} $ possesses the properties (P\ref{p:o}) and (P\ref{p:s}). Then we have for any $ v \in H^s (\mathbb{T}) $ and $ (h, \Delta t) \in (0,1)^2 $, 
  \begin{align} 
    \norm{s, 2, \Delta}{\mathcal{I}_{h} v} & \leq \left(1 + (C_s)^2 \Delta t \right) \norm{s, 2, \Delta}{v}, 
  \end{align} 
  where $ C_s $ is the constant introduced in (P\ref{p:s}). 
\end{lemma}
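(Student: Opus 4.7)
The plan is to mirror the argument of Lemma~\ref{lem:2}, but to use the weighted $H^s$-norm $\norm{s, 2, \Delta}{\,\cdot\,}$ on both sides, so that the parameter in the Young-type splitting can be chosen without referring to any mesh hypothesis. (The printed right-hand side thus has constant $(C_s)^2$ rather than $(C_s)^2 C_{\Delta 1}$, and no condition of the form $h^{2s}/\Delta t \le C_{\Delta 1}$ is needed.) First, I would write
\begin{align}
  \norm{s, 2, \Delta}{\mathcal{I}_{h} v}^2 = \norm{0, 2}{v + (\mathcal{I}_{h} - I) v}^2 + \frac{h^{2s}}{\Delta t} \snorm{s, 2}{\mathcal{I}_{h} v}^2
\end{align}
and apply the elementary inequality \eqref{2-1} with $d = (C_s)^2 \Delta t$, producing the bound
\begin{align}
  \norm{s, 2, \Delta}{\mathcal{I}_{h} v}^2 \leq (1 + (C_s)^2 \Delta t) \norm{0, 2}{v}^2 + \left(1 + \frac{1}{(C_s)^2 \Delta t}\right) \norm{0, 2}{(\mathcal{I}_{h} - I) v}^2 + \frac{h^{2s}}{\Delta t} \snorm{s, 2}{\mathcal{I}_{h} v}^2.
\end{align}

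The next step is to control the middle term via (P\ref{p:s}) exactly as in \eqref{2-2}, using the fact that $\mathcal{I}_h$ is idempotent on its image so that $(I - \mathcal{I}_h)^2 = I - \mathcal{I}_h$; this yields $\norm{0, 2}{(\mathcal{I}_{h} - I) v}^2 \leq (C_s)^2 h^{2s} \snorm{s, 2}{(I - \mathcal{I}_{h}) v}^2$. The crucial algebraic observation that replaces the mesh hypothesis used in Lemma~\ref{lem:2} is
\begin{align}
  \left(1 + \frac{1}{(C_s)^2 \Delta t}\right) (C_s)^2 h^{2s} = (C_s)^2 h^{2s} + \frac{h^{2s}}{\Delta t} = \left(1 + (C_s)^2 \Delta t\right) \frac{h^{2s}}{\Delta t},
\end{align}
which is precisely the weight appearing in $\norm{s, 2, \Delta}{\,\cdot\,}$, multiplied by the common factor $1 + (C_s)^2 \Delta t$.

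Finally, I would factor out $1 + (C_s)^2 \Delta t$ and invoke (P\ref{p:o}) with $w = -v$, which gives the Pythagorean-type identity $\snorm{s, 2}{(I - \mathcal{I}_{h}) v}^2 + \snorm{s, 2}{\mathcal{I}_{h} v}^2 = \snorm{s, 2}{v}^2$. This collapses the right-hand side to $(1 + (C_s)^2 \Delta t) \norm{s, 2, \Delta}{v}^2$; taking square roots and using $\sqrt{1 + a} \leq 1 + a$ for $a \geq 0$ finishes the proof. I do not anticipate a genuine obstacle here: the whole design of the weighted norm is to allow $d = (C_s)^2 \Delta t$ in \eqref{2-1}, which is exactly the trade-off that renders the constraint $h^{2s}/\Delta t \leq C_{\Delta 1}$ unnecessary. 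The only point to be slightly careful about is ensuring that the idempotency of $\mathcal{I}_h$ (implicitly needed when applying (P\ref{p:s}) to $(I - \mathcal{I}_h) v$) is available, which it is for both the spline and Hermite interpolants considered in the paper.
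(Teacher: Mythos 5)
Your proposal is correct and follows essentially the same route as the paper's own proof: the Young-type splitting \eqref{2-1} with $d=(C_s)^2\Delta t$, the bound \eqref{2-2} obtained from (P\ref{p:s}) via idempotency of $\mathcal{I}_h$, the algebraic identity $\bigl(1+\tfrac{1}{(C_s)^2\Delta t}\bigr)(C_s)^2 h^{2s}=(1+(C_s)^2\Delta t)\tfrac{h^{2s}}{\Delta t}$, and the Pythagorean identity from (P\ref{p:o}). You also correctly read the estimate as one in the weighted norm $\norm{s,2,\Delta}{\,\cdot\,}$ (as the paper's proof does, the $\ast$ in the statement being a typo), which is exactly why no mesh-ratio hypothesis is needed.
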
 

\begin{proof} 
  Using \eqref{2-1}, we have 
  \begin{align} 
    & \norm{s, 2, \Delta}{\mathcal{I}_{h} v}^2 \\ & = \norm{0, 2}{v + (\mathcal{I}_{h} - I) v}^2 + \frac{h^{2s}}{\Delta t} \snorm{s, 2}{\mathcal{I}_{h} v}^2 \\ 
    & \leq (1 + (C_s)^2 \Delta t) \norm{0, 2}{v}^2 + \left(1 + \frac{1}{(C_s)^2 \Delta t}\right) \norm{0, 2}{(\mathcal{I}_{h} - I) v}^2 + \frac{h^{2s}}{\Delta t} \snorm{s, 2}{\mathcal{I}_{h} v}^2. \label{w-1} 
  \end{align} 
  Applying \eqref{2-2} to \eqref{w-1}, we have 
  \begin{align} 
    & \norm{s, 2, \Delta}{\mathcal{I}_{h} v}^2 \leq (1 + (C_s)^2 \Delta t)\left(\norm{0, 2}{v}^2 + \frac{h^{2s}}{\Delta t} \snorm{s, 2}{(I - \mathcal{I}_{h})v}^2 + \frac{h^{2s}}{\Delta t} \snorm{s, 2}{\mathcal{I}_{h} v}^2\right). 
  \end{align} 
  By (P\ref{p:o}), we obtain 
  \begin{align} 
    \norm{s, 2, \Delta}{\mathcal{I}_{h} v}^2 & \leq (1 + (C_s)^2 \Delta t) \left(\norm{0, 2}{v}^2 + \frac{h^{2s}}{\Delta t} \snorm{s, 2}{v}^2\right) \\ & = (1 + (C_s)^2 \Delta t)  \norm{s, 2, \Delta}{v}^2, 
  \end{align}
  which concludes the proof. 
\end{proof} 

The following theorem is the main result of this paper. 

\begin{theorem} \label{thm:2} 
  Suppose that the interpolation operator $ \mathcal{I}_{h} $ possesses the properties (P\ref{p:o}), (P\ref{p:s}) and (P\ref{p:q}) for positive integers $ s $ and $ q $. 
  Furthermore, assume that there exist positive constants $ C_{\Delta 2} $ and $ \varepsilon $ such that $ h^{2s} / \Delta t \leq C_{\Delta 2} $ and $ h^{2(q - s)} / \Delta t^{1 + \varepsilon} \leq C_{\Delta 2} $. Then there exist $ (h_0, \Delta t_0^\prime) \in (0, 1)^2 $ and $ C_{e2} > 0 $ depending on $ \norm{W^{1,\infty}(H^{s + 2})}{u} $, $ \norm{L^{\infty} (H^{s + 4})}{u} $ and $ \norm{L^{\infty}(H^{q})}{u} $ such that, if $ (h, \Delta t) \in (0, h_0) \times (0, \Delta t_0^\prime) $, the following estimate holds: 
  \begin{align} 
    \norm{s, 2, \Delta}{u^n - u_h^n} \leq C_{e2} \left(\Delta t + \frac{h^q}{\Delta t}\right) \label{b26} 
  \end{align} 
  for all $ n \in \{0, \ldots, N_t\} $. 
  In particular, this implies the $ L^2 $-error estimate 
  \begin{align} 
    \norm{0,2}{u^n - u_h^n} \leq C_{e2} \left(\Delta t + \frac{h^q}{\Delta t}\right). \label{b26-2}
  \end{align} 
\end{theorem}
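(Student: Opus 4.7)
The plan is to mirror the strategy used in the proof of Theorem~\ref{thm:1}, but to carry out every estimate in the weighted norm $\norm{s,2,\Delta}{\cdot}$ and to use Lemma~\ref{lem:w} in place of Lemma~\ref{lem:2}. A crucial simplification is that Theorem~\ref{thm:1} has already established the a priori bound $\norm{s,2,\ast}{u_h^n} \leq 2\sup_{t\in(0,T)}\norm{s,2,\ast}{u(t)}$ uniformly in $n$. This frees the present argument from the inductive bootstrap used in that proof, and it is precisely this bound that controls the polynomial constants $C_{\mathrm{P}}(\norm{s,2,\ast}{u_h^{n-1}})$ produced by the composition lemmas in the appendix.

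First I would recycle the decomposition from Theorem~\ref{thm:1}: $e_h^n = \mathcal{I}_{h}(\eta_h^n + \theta_h^n + \Delta t\,\tau_h^n) + \rho_h^n$ with the same $\eta_h^n$, $\theta_h^n$, $\tau_h^n$, $\rho_h^n$. Property~(P\ref{p:o}) splits the seminorm as $\snorm{s,2}{e_h^n}^2 = \snorm{s,2}{\mathcal{I}_{h}(\eta_h^n+\theta_h^n+\Delta t\,\tau_h^n)}^2 + \snorm{s,2}{\rho_h^n}^2$, while the four-term elementary inequality used in \eqref{b12-2} bounds the $L^2$-part. Weighting the seminorm part by $h^{2s}/\Delta t$ and combining, I would assemble an analogue of \eqref{b12}:
\[
  \norm{s,2,\Delta}{e_h^n}^2 \leq (1+\Delta t)\norm{s,2,\Delta}{\mathcal{I}_{h}\eta_h^n}^2 + \tfrac{6}{\Delta t}\norm{s,2,\Delta}{\mathcal{I}_{h}\theta_h^n}^2 + 6\Delta t\,\norm{s,2,\Delta}{\mathcal{I}_{h}\tau_h^n}^2 + \tfrac{6}{\Delta t}\norm{0,2}{\rho_h^n}^2 + \tfrac{h^{2s}}{\Delta t}\snorm{s,2}{\rho_h^n}^2.
\]

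Next, Lemma~\ref{lem:w} removes $\mathcal{I}_{h}$ from each interpolated term up to a $(1+C\Delta t)$ factor. The $\eta_h^n$ term is controlled by $\norm{s,2,\Delta}{\eta_h^n} \leq (1+C\Delta t)\norm{s,2,\Delta}{e_h^{n-1}}$ using the weighted-norm stability of $\mathcal{S}_{\Delta t}^{\mathrm{V}}$ in \eqref{b7-3} together with the composition lemmas, the constants being controlled by the a priori $H^s$-bound from Theorem~\ref{thm:1}. The $\theta_h^n$ term is handled by the same absorb-into-left-hand-side trick as in \eqref{b10-2}--\eqref{b13}, giving $\norm{s,2,\Delta}{\theta_h^n} \leq C\Delta t\bigl(\norm{s,2,\Delta}{\eta_h^n} + \Delta t\,\norm{s,2,\Delta}{\tau_h^n} + \norm{s,2,\Delta}{\rho_h^n}\bigr)$. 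The consistency term satisfies $\norm{s,2,\Delta}{\tau_h^n} \leq C\Delta t$ by \eqref{tau_0} and \eqref{tau_s}. The decisive improvement lies in the $\rho_h^n$ contribution: applying (P\ref{p:s}) and (P\ref{p:q}) yields
\[
  \tfrac{6}{\Delta t}\norm{0,2}{\rho_h^n}^2 + \tfrac{h^{2s}}{\Delta t}\snorm{s,2}{\rho_h^n}^2 \leq C\,\frac{h^{2q}}{\Delta t},
\]
since the $\snorm{s,2}{\rho_h^n}^2$ factor is now damped by $h^{2s}/\Delta t$, so the unweighted $h^{2(q-s)}$ contribution that produced the extra $h^{q-s}/\Delta t^{1/2}$ term in Theorem~\ref{thm:1} no longer appears.

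Collecting everything, I obtain the Gronwall-type inequality
\[
  \norm{s,2,\Delta}{e_h^n}^2 \leq (1+C\Delta t)\,\norm{s,2,\Delta}{e_h^{n-1}}^2 + C\bigl(\Delta t^3 + h^{2q}/\Delta t\bigr),
\]
which, together with the initial bound $\norm{s,2,\Delta}{e_h^0}^2 \leq C\,h^{2q}/\Delta t$, iterates to $\norm{s,2,\Delta}{e_h^n}^2 \leq C(\Delta t^2 + h^{2q}/\Delta t^2)$. Taking square roots gives \eqref{b26}, and \eqref{b26-2} follows from $\norm{0,2}{\cdot} \leq \norm{s,2,\Delta}{\cdot}$. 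The most delicate step, as in Theorem~\ref{thm:1}, is the treatment of $\theta_h^n$: the composition estimates it requires depend polynomially on $\norm{s,2,\ast}{u_h^{n-1}}$ rather than on its weighted-norm analogue, and it is precisely the uniform $H^s$-boundedness of $u_h^n$ furnished by Theorem~\ref{thm:1} that keeps these constants uniform in $n$ and allows the Gronwall argument to close.
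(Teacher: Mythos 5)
Your proposal is correct and follows essentially the same route as the paper: the same decomposition $e_h^n = \mathcal{I}_{h}(\eta_h^n+\theta_h^n+\Delta t\,\tau_h^n)+\rho_h^n$, the weighted interpolation stability of Lemma~\ref{lem:w}, the weighted composition estimates \eqref{c2-2} and \eqref{d22}, the absorption argument for $\theta_h^n$, the uniform $H^s$-bound \eqref{b14} from the proof of Theorem~\ref{thm:1} to control the polynomial constants, and a Gronwall iteration yielding $O(\Delta t^2 + h^{2q}/\Delta t^2)$, with the key gain coming from the $h^{2s}/\Delta t$ weight damping the $\snorm{s,2}{\rho_h^n}$ contribution. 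The only deviation (the extra $\norm{s,2,\Delta}{\rho_h^n}$ term kept in your bound for $\theta_h^n$) is harmless, as it contributes terms already of the admissible order $h^{2q}/\Delta t$.
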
 

\begin{proof} 
  For $ n = 0 $, using (P\ref{p:s}) and (P\ref{p:q}), we can estimate as 
  \begin{align} 
    \norm{s, 2, \Delta}{e_h^0} & = \left(\norm{0,2}{(I - \mathcal{I}_{h}) u_0}^2 + \frac{h^{2s}}{\Delta t} \snorm{s,2}{(I - \mathcal{I}_{h}) u_0}^2\right)^{1/2} \\ 
    & \leq C \frac{h^q}{\Delta t} \snorm{q, 2}{u}. \label{b25}
  \end{align} 
  
  Combining \eqref{b12-1} and \eqref{b12-2}, we have 
  \begin{align} 
    \norm{s, 2, \Delta}{e_h^n}^2 & = \norm{0, 2}{e_h^n}^2 + \frac{h^{2 s}}{\Delta t} \snorm{s, 2}{e_h^n}^2 \\ & \leq (1 + 3 \Delta t) \norm{s, 2, \Delta}{\mathcal{I}_{h} \eta_h^n}^2 \\ & \quad + \frac{6}{\Delta t} \left(\norm{s, 2, \Delta}{\mathcal{I}_{h} \theta_h^n}^2 + \norm{s, 2, \Delta}{\Delta t \mathcal{I}_{h} \tau_h^n}^2 + \norm{0, 2}{\rho_h^n}^2\right) + \frac{h^{2s}}{\Delta t} \snorm{s, 2}{\rho_h^n}^2. 
  \end{align}
  By Lemma~\ref{lem:w}, we have 
  \begin{align} 
    \norm{s, 2, \Delta}{e_h^n}^2 & \leq (1 + 3 \Delta t) \norm{s, 2, \Delta}{\eta_h^n}^2 + \frac{6}{\Delta t} \norm{s, 2, \Delta}{\theta_h^n}^2 \\ & \quad + 6 \Delta t \norm{s, 2, \Delta}{\tau_h^n}^2 + \frac{6}{\Delta t} \norm{0, 2}{\rho_h^n}^2 + \frac{h^{2s}}{\Delta t} \snorm{s, 2}{\rho_h^n}^2. \label{b24} 
  \end{align}

  We estimate $ \eta_h^n $. 
  By \eqref{b9-2} and \eqref{c2-2}, we have 
  \begin{align}
    \norm{s, 2, \Delta}{\eta_h^n} & = \norm{s, 2, \Delta}{(\mathcal{S}_{\Delta t}^{\mathrm{V}} e_h^n \circ {X_{\Delta t}^1} [f \circ (\mathcal{S}_{\Delta t}^{\mathrm{A}} \mathcal{S}_{\Delta t}^{\mathrm{V}} u_h^{n - 1})])} \\ & \leq \left(1 + C_{\mathrm{P}} (\norm{s, 2, \ast}{\mathcal{S}_{\Delta t}^{\mathrm{A}} \mathcal{S}_{\Delta t}^{\mathrm{V}} u_h^{n - 1}}) \Delta t\right) \norm{s, 2, \Delta}{\mathcal{S}_{\Delta t}^{\mathrm{V}} e_h^{n - 1}}. 
  \end{align}
  By \eqref{b8} and \eqref{b7-3}, we have 
  \begin{align}
    \norm{s, 2, \Delta}{\eta_h^n} \leq \left(1 + C_{\mathrm{P}} (\norm{s, 2, \ast}{u_h^{n - 1}}) \Delta t\right) \norm{s, 2, \Delta}{e_h^{n - 1}}. \label{b22-2}
  \end{align} 
  Let $ h_0 $ and $ \Delta t_0 $ be defined as in the proof of Theorem~\ref{thm:1}. In the following, suppose that $ (h, \Delta t) \in (0, h_0) \times (0, \Delta t_0) $. We have shown that $ \norm{s, 2, \ast}{u_h^n} $ is bounded as \eqref{b14}. Therefore, \eqref{b22-2} can be rewritten as 
  \begin{align} 
    \norm{s, 2, \Delta}{\eta_h^n} \leq \left(1 + C_{\mathrm{P}} (\norm{s, 2, \ast}{u^{n - 1}}) \Delta t\right) \norm{s, 2, \Delta}{e_h^{n - 1}}. \label{b22} 
  \end{align} 

  By \eqref{tau_0} and \eqref{tau_s}, we can estimate $ \norm{s, 2, \Delta}{\tau_h^n} $ as 
  \begin{align} 
    \norm{s, 2, \Delta}{\tau_h^n}^2 \leq \left((C_{\tau,0})^2 + C_{\Delta 2} (C_{\tau,s})^2\right)\Delta t^2. \label{b32} 
  \end{align} 

  By (P\ref{p:s}) and (P\ref{p:q}), we can estimate $ \frac{6}{\Delta t} \norm{0, 2}{\rho_h^n}^2 + \frac{h^{2s}}{\Delta t} \snorm{s, 2}{\rho_h^n}^2 $ as 
  \begin{align} 
    \frac{6}{\Delta t} \norm{0, 2}{\rho_h^n}^2 + \frac{h^{2s}}{\Delta t} \snorm{s, 2}{\rho_h^n}^2 \leq C \frac{h^{2q}}{\Delta t} \snorm{q,2}{u^n}. \label{b33}
  \end{align}

  We estimate $ \theta_h^n $. By \eqref{d22}, we have 
  \begin{align} 
    & \norm{s, 2, \Delta}{\theta_h^n} \\
    & = \norm{s, 2, \Delta}{(\mathcal{S}_{\Delta t}^{\mathrm{V}} u^{n - 1}) \circ {X_{\Delta t}^1}[f \circ u^n] - (\mathcal{S}_{\Delta t}^{\mathrm{V}} u^{n - 1}) \circ {X_{\Delta t}^1}[f \circ (\mathcal{S}_{\Delta t}^{\mathrm{A}} \mathcal{S}_{\Delta t}^{\mathrm{V}} u_h^{n - 1})]} \\
    & \leq C_{\mathrm{P}} (\norm{s, 2, \ast}{u^n}, \norm{s, 2, \ast}{\mathcal{S}_{\Delta t}^{\mathrm{A}} \mathcal{S}_{\Delta t}^{\mathrm{V}} u_h^{n - 1}}) \Delta t \norm{s + 1, \infty}{\mathcal{S}_{\Delta t}^{\mathrm{V}} u^{n - 1}} \\ & \qquad \cdot \norm{s, 2, \Delta}{u^n - \mathcal{S}_{\Delta t}^{\mathrm{A}} \mathcal{S}_{\Delta t}^{\mathrm{V}} u_h^{n - 1}}. 
  \end{align}
  By \eqref{b7-2}, \eqref{b9} and \eqref{b8}, this becomes 
  \begin{align} 
    \norm{s, 2, \Delta}{\theta_h^n} & \leq C_{\mathrm{P}} (\norm{L^\infty(W^{s+1, \infty})}{u}, \norm{s, 2, \ast}{u_h^{n - 1}}) \Delta t \\ & \qquad \cdot (\norm{s, 2, \Delta}{\eta_h^n} + \norm{s, 2, \Delta}{\theta_h^n} + \Delta t \norm{s, 2, \Delta}{\tau_h^n}). 
  \end{align} 
  Since we have the bound for the numerical solution as \eqref{b14}, there exists a positive constant $ C_{3} $ depending on $ \norm{L^\infty(W^{s+1, \infty}(\mathbb{T}))}{u} $ such that 
  \begin{align}
    \norm{s, 2, \Delta}{\theta_h^n} \leq C_{3} \Delta t (\norm{s, 2, \Delta}{\eta_h^n} + \norm{s, 2, \Delta}{\theta_h^n} + \Delta t \norm{s, 2, \Delta}{\tau_h^n}). 
    \label{b21}
  \end{align}
  Let $ \Delta t_4 = \min \{1 / (2 C_{3}), 1\} $ and suppose $ \Delta t \leq \Delta t_4 $. 
  Then we have $ 1 / (1 - C_{3} \Delta t) \leq 1 + 2 C_{3} \Delta t $. 
  Therefore \eqref{b21} becomes 
  \begin{align}
    \norm{s, 2, \Delta}{\theta_h^n} \leq C_{3} \Delta t (1 + 2 C_{3} \Delta t) (\norm{s, 2, \Delta}{\eta_h^n} + \Delta t \norm{s, 2, \Delta}{\tau_h^n}). \label{b23}
  \end{align} 
  Let $ \Delta t_0^\prime = \min \{\Delta t_0, \Delta t_4\} $. In the remainder of the proof, we assume that $ \Delta t \in (0, \Delta t_0^\prime) $. 

  By \eqref{b24}, \eqref{b22} and \eqref{b23}, there exists a positive constant $ \alpha_2 $ such that 
  \begin{align} 
    \norm{s, 2, \Delta}{e_h^n}^2 & \leq (1 + \alpha_2 \Delta t) \norm{s, 2, \Delta}{e_h^{n - 1}}^2 + C \Delta t \norm{s, 2, \Delta}{\tau_h^n}^2 \\ & \quad + \frac{6}{\Delta t} \norm{0, 2}{\rho_h^n}^2 + \frac{h^{2s}}{\Delta t} \snorm{s, 2}{\rho_h^n}^2. \label{b24-2} 
  \end{align} 
  Applying \eqref{b32} and \eqref{b33} to \eqref{b24-2}, we obtain 
  \begin{align}
    \norm{s, 2, \Delta}{e_h^n}^2 
    & \leq (1 + \alpha_2 \Delta t) \norm{s, 2, \Delta}{e_h^{n - 1}}^2 + \beta_2 \left(\Delta t^3 + \frac{h^{2 q}}{\Delta t}\right), 
  \end{align} 
  where $ \beta_2 $ is a positive constant depending on $ \norm{W^{2, \infty}(H^s)}{u} $, $ \norm{L^{\infty}(H^{s + 4})}{u} $ and $ \norm{L^{\infty}(H^q)}{u} $. Therefore, we have 
  \begin{align} 
    \norm{s, 2, \Delta}{e_h^n}^2 \leq (1 + \alpha_2 \Delta t)^n \norm{s, 2, \Delta}{e_h^0}^2 + \sum_{k=1}^{n} (1 + \alpha_2 \Delta t)^k \beta_2 \left(\Delta t^3 + \frac{h^{2 q}}{\Delta t}\right). 
  \end{align} 
  By \eqref{b25} and $ (1 + \alpha_2 \Delta t)^n \leq e^{\alpha_2 T} $, we have 
  \begin{align} 
    \norm{s, 2, \Delta}{e_h^n}^2 \leq e^{\alpha_2 T} \left(C \frac{h^{2q}}{\Delta t^2} \snorm{q, 2}{u_0}^2 + \beta_2 T \left(\Delta t^2 + \frac{h^{2q}}{\Delta t^2}\right)\right), 
  \end{align} 
  which implies \eqref{b26}. 

  Inequality \eqref{b26-2} follows from \eqref{b26}. 
\end{proof} 

\section{Numerical experiments} 
\label{s:4} 

We consider the problem \eqref{ad} with $ f (u) = u $, that is, the initial value problem for the Burgers equation: 
\begin{align} 
  \begin{cases} 
    \partial_t u + u \partial_x u - \nu \partial_x^2 u = 0, & \text{in} \quad \mathbb{T} \times [0, T], \\ 
    u(0) = u_0, & \text{in} \quad \mathbb{T} 
  \end{cases} 
\end{align} 
with $ T = 1 $. The initial condition is given by 
\begin{align} 
  u_0 (x) = \frac{4 \nu A \pi \sin (2 \pi x)}{1 + A \cos (2 \pi x)} 
\end{align} 
for $ A = 9/10 $ and $ \nu = 10^{-3} $. The corresponding exact solution is 
\begin{align} 
  u (x, t) = \frac{4 \nu A \pi \exp (- 4 \pi^2 \nu t) \sin (2 \pi x)}{1 + A \exp (- 4 \pi^2 \nu t) \cos (2 \pi x)}. 
\end{align} 

For the exact solution $ u $ and the numerical solution $ \{u_h^n\}_{n=0}^{N_t} $, we define the relative $ L^2 $-error by 
\begin{align} 
  \mathcal{E}_h = \frac{\norm{0,2}{u(T) - u_h^{N_t}}}{\norm{0,2}{u(T)}}, 
\end{align}
where the $ L^2 $-norms are computed using the Gauss--Legendre quadrature with seven nodes. 

We conducted numerical experiments using five different interpolation operators: linear, cubic spline, quintic spline, cubic Hermite, and quintic Hermite interpolation. Note that linear interpolation can be regarded as either a spline or Hermite interpolation of degree $ s = 1 $. However, Theorems~\ref{thm:1} and \ref{thm:2} do not cover this case. 

Three sets of experiments performed, each with a different choice of discretization parameters. In the first experiment, the spatial mesh size $h$ is varied while the time step size is fixed at $\Delta t = 10^{-3}$ (Figure~\ref{fig:1}). In this setting, we observe that the accuracy of the scheme improves as either the interpolation order increases or the mesh size decreases. 

In the second experiment, the time step size $\Delta t$ is varied while the spatial mesh size is fixed at $h = 10^{-3}$ (Figure~\ref{fig:2}). In the case of the linear interpolation, the error increases as $ \Delta t $ decreases. In contrast, when higher-order interpolation is used, the error decreases as the time step size becomes smaller. 

In the third experiment, we set $ h = 1 / 20, 1 / 40, \ldots, 1 / 320 $ and the time step size is defined as $\Delta t = (10h)^s$ (Figure~\ref{fig:3}). According to Theorem~\ref{thm:2}, when the spline or Hermite interpolation operator of $ (2 s - 1) $-th order accuracy is employed and $ h^s / \Delta t $ is kept constant, the $ L^2 $-error is expected to be $ O(h^s) $. The convergence rates observed in Figure~\ref{fig:3} are consistent with this theoretical prediction. 

\def\SlopePositionX{2.5} 
\begin{figure}
  \centering
  \begin{tikzpicture} 
    \begin{loglogaxis}[
      clip=false,  
      xlabel={Spatial mesh size $ h $}, 
      ylabel={$ L^2 $ Error}, 
      grid=both, 
      width=8cm, 
      height=8cm, 
      legend style={at={(1.05,1)}, anchor=north west}
    ]
      \addplot[
        mark=triangle,
        mark size=4, 
      ] coordinates {
        (1/20 , 8.285e-01)
        (1/40 , 6.635e-01)
        (1/80 , 4.929e-01)
        (1/160, 3.203e-01)
        (1/320, 1.574e-01)
      };
      \addlegendentry{Linear}
      \addplot[
        mark=o,
        mark size=4, 
      ] coordinates {
        (1/20 , 2.586e-02)
        (1/40 , 6.341e-03)
        (1/80 , 1.461e-03)
        (1/160, 2.892e-04)
        (1/320, 4.806e-05)
      };
      \addlegendentry{Cubic spline}
      \addplot[
        mark=square,
        mark size=4,
      ] coordinates {
        (1/20 , 1.101e-02)
        (1/40 , 2.421e-04)
        (1/80 , 2.654e-05)
        (1/160, 1.886e-05)
        (1/320, 1.852e-05)
      };
      \addlegendentry{Quintic spline}
      \addplot[
        mark=x,
        mark size=4,
      ] coordinates {
        (1/20 , 2.566e-02)
        (1/40 , 6.268e-03)
        (1/80 , 1.448e-03)
        (1/160, 2.874e-04)
        (1/320, 4.787e-05)
      }; 
      \addlegendentry{Cubic Hermite} 
      \addplot[
        mark=+,
        mark size=4,
      ] coordinates {
        (1/20 , 4.139e-03)
        (1/40 , 1.245e-04)
        (1/80 , 2.257e-05)
        (1/160, 1.863e-05)
        (1/320, 1.851e-05)
      }; 
      \addlegendentry{Quintic Hermite} 
      \draw[thick] (axis cs:\SlopePositionX*1/40,  1e-5) -- (axis cs:\SlopePositionX*1/80,  1e-5); 
      \draw[thick] (axis cs:\SlopePositionX*1/40,  1e-5) -- (axis cs:\SlopePositionX*1/40, 32e-5); 
      \draw[thick] (axis cs:\SlopePositionX*1/40,  2e-5) -- (axis cs:\SlopePositionX*1/80,  1e-5); 
      \draw[thick] (axis cs:\SlopePositionX*1/40,  8e-5) -- (axis cs:\SlopePositionX*1/80,  1e-5); 
      \draw[thick] (axis cs:\SlopePositionX*1/40, 32e-5) -- (axis cs:\SlopePositionX*1/80,  1e-5); 
      \node[anchor=west] at (axis cs:\SlopePositionX*1/40,  2e-5) {Slope $ 1 $}; 
      \node[anchor=west] at (axis cs:\SlopePositionX*1/40,  8e-5) {Slope $ 3 $}; 
      \node[anchor=west] at (axis cs:\SlopePositionX*1/40, 32e-5) {Slope $ 5 $}; 
    \end{loglogaxis} 
  \end{tikzpicture} 
  \caption{Relative $ L^2 $-error versus the spatial mesh size $ h $. The mesh is uniform in both space and time, and the time step size is fixed at $ \Delta t = 1 / 1000$. } 
  \label{fig:1} 
\end{figure}
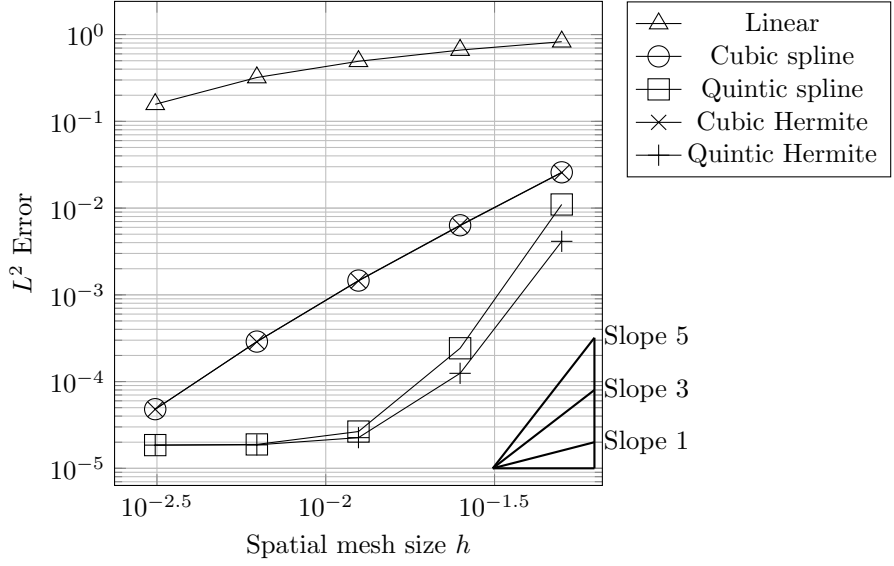 


\begin{figure}
  \centering
  \begin{tikzpicture} 
    \begin{loglogaxis}[ 
      clip=false,  
      xlabel={Spatial mesh size $ \Delta t $}, 
      ylabel={$ L^2 $ Error}, 
      grid=both, 
      width=8cm, 
      height=8cm, 
      legend style={at={(1.05,1)}, anchor=north west}
    ]
      \addplot[
        mark=triangle,
        mark size=4, 
      ] coordinates {
        (1/20 , 0.0009790)
        (1/40 , 0.0009153)
        (1/80 , 0.001246 )
        (1/160, 0.003242 )
        (1/320, 0.006767 )
      };
      \addlegendentry{Linear}
      \addplot[
        mark=o,
        mark size=4, 
      ] coordinates {
        (1/20 , 0.0009393)
        (1/40 , 0.0004660)
        (1/80 , 0.0002321)
        (1/160, 0.0001158)
        (1/320, 5.792e-05)
      };
      \addlegendentry{Cubic spline}
      \addplot[
        mark=square,
        mark size=4,
      ] coordinates {
        (1/20 , 0.0009393)
        (1/40 , 0.0004660)
        (1/80 , 0.0002321)
        (1/160, 0.000115 )
        (1/320, 5.786e-05)
      };
      \addlegendentry{Quintic spline}
      \addplot[
        mark=x,
        mark size=4,
      ] coordinates {
        (1/20 , 0.0009393)
        (1/40 , 0.0004660)
        (1/80 , 0.0002321)
        (1/160, 0.0001158)
        (1/320, 5.792e-05)
      }; 
      \addlegendentry{Cubic Hermite} 
      \addplot[
        mark=+,
        mark size=4,
      ] coordinates {
        (1/20 , 0.0009393)
        (1/40 , 0.0004660)
        (1/80 , 0.0002321)
        (1/160, 0.0001158)
        (1/320, 5.786e-05)
      }; 
      \addlegendentry{Quintic Hermite} 
      \draw[thick] (axis cs:\SlopePositionX*1/40,  1e-4) -- (axis cs:\SlopePositionX*1/80,  1e-4); 
      \draw[thick] (axis cs:\SlopePositionX*1/40,  1e-4) -- (axis cs:\SlopePositionX*1/40,  2e-4); 
      \draw[thick] (axis cs:\SlopePositionX*1/40,  2e-4) -- (axis cs:\SlopePositionX*1/80,  1e-4); 
      \node[anchor=west] at (axis cs:\SlopePositionX*1/40,  2e-4) {Slope $ 1 $}; 
    \end{loglogaxis} 
  \end{tikzpicture} 
  \caption{Relative $L^2$-errors versus the time step size $ \Delta t $. The mesh is uniform in both space and time, and the spatial mesh size is fixed at $h = 1 / 1000$. } \label{fig:2} 
\end{figure}
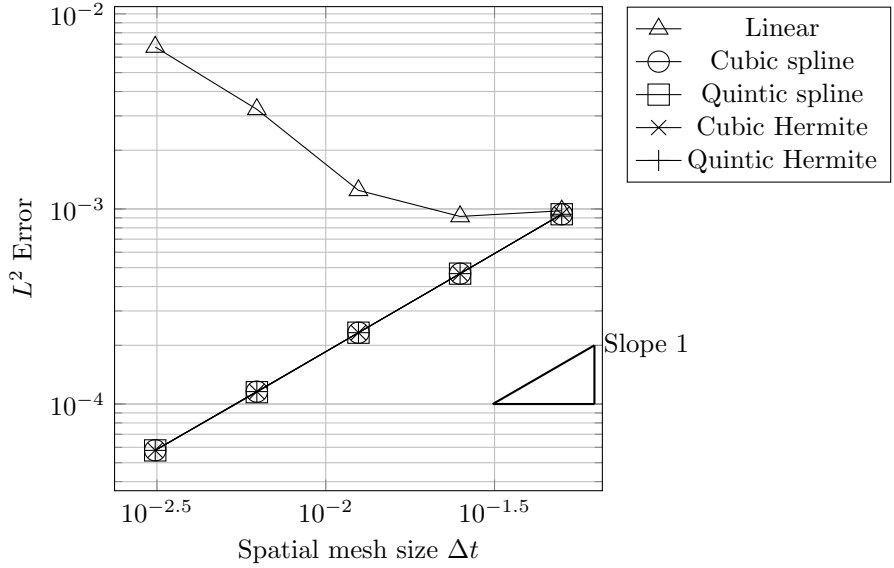 


\begin{figure} 
  \centering 
  \begin{tikzpicture} 
    \begin{loglogaxis}[
      clip=false,   
      xlabel={Spatial mesh size $ h $}, 
      ylabel={$ L^2 $ Error}, 
      grid=both, 
      width=8cm, 
      height=8cm, 
      legend style={at={(1.05,1)}, anchor=north west} 
    ]
      \addplot[
        mark=triangle,
        mark size=4, 
      ] coordinates {
        (1/20 , 0.118734   )                         
        (1/40 , 0.03963    )                         
        (1/80 , 0.0165264  )                         
        (1/160, 0.00877888 )                         
        (1/320, 0.00585609 )   
      };
      \addlegendentry{Linear}
      \addplot[
        mark=o,
        mark size=4, 
      ] coordinates {
        (1/20 , 0.0220403  )                         
        (1/40 , 0.00361572 )                         
        (1/80 , 0.000806031)                         
        (1/160, 0.000195046)                         
        (1/320, 4.83545e-05)                         
      };
      \addlegendentry{Cubic spline}
      \addplot[
        mark=square,
        mark size=4,
      ] coordinates {
        (1/20 , 0.0123807  )                         
        (1/40 , 0.000453822)                         
        (1/80 , 4.34447e-05)                         
        (1/160, 4.94654e-06)                         
        (1/320, 6.05482e-07)                  
      };
      \addlegendentry{Quintic spline}
      \addplot[
        mark=x,
        mark size=4,
      ] coordinates {
        (1/20 , 0.0122685  )                         
        (1/40 , 0.00291468 )                         
        (1/80 , 0.000758296)                         
        (1/160, 0.000191999)                         
        (1/320, 4.8163e-05 )                         
      }; 
      \addlegendentry{Cubic Hermite} 
      \addplot[
        mark=+,
        mark size=4,
      ] coordinates {
        (1/20 , 0.00330609 )                         
        (1/40 , 0.000326505)                         
        (1/80 , 3.91937e-05)                         
        (1/160, 4.75888e-06)                         
        (1/320, 5.97141e-07) 
      }; 
      \addlegendentry{Quintic Hermite} 
      \draw[thick] (axis cs:\SlopePositionX*1/40,  1e-6) -- (axis cs:\SlopePositionX*1/80,  1e-6); 
      \draw[thick] (axis cs:\SlopePositionX*1/40,  1e-6) -- (axis cs:\SlopePositionX*1/40, 8e-6); 
      \draw[thick] (axis cs:\SlopePositionX*1/40, 2e-6) -- (axis cs:\SlopePositionX*1/80,  1e-6); 
      \draw[thick] (axis cs:\SlopePositionX*1/40, 4e-6) -- (axis cs:\SlopePositionX*1/80,  1e-6); 
      \draw[thick] (axis cs:\SlopePositionX*1/40, 8e-6) -- (axis cs:\SlopePositionX*1/80,  1e-6); 
      \node[anchor=west] at (axis cs:\SlopePositionX*1/40, 2e-6) {Slope $ 1 $}; 
      \node[anchor=west] at (axis cs:\SlopePositionX*1/40, 4e-6) {Slope $ 2 $}; 
      \node[anchor=west] at (axis cs:\SlopePositionX*1/40, 8e-6) {Slope $ 3 $}; 
    \end{loglogaxis} 
  \end{tikzpicture} 
  \caption{Relative $L^2$-errors versus the spatial mesh size $ h $. The time step size is given by $ \Delta t = (10 h)^s $. The mesh is uniform in both space and time. } 
  \label{fig:3} 
\end{figure}
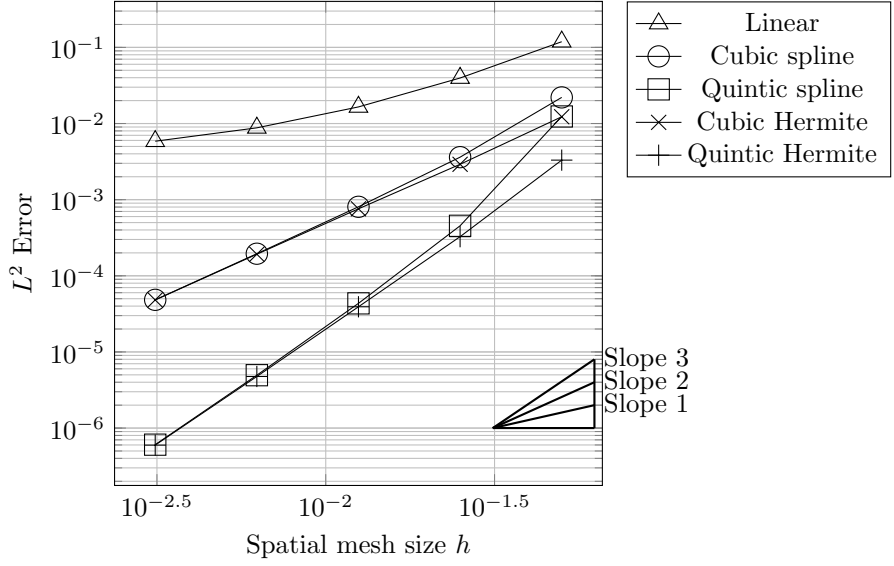 


\appendix 
\section{Auxiliary lemmas} \label{s:5} 

In this section, we prepare some lemmas for the proofs of Theorems~\ref{thm:1} and \ref{thm:2}. 

We define the Bell polynomial $ B_{l, k} $ by 
\begin{align} 
  B_{l, k} (\xi_1, \ldots, \xi_{l - k + 1}) = \sum_{\alpha \in \mathcal{A}_{s,l}} C_{l,\alpha} \prod_{j = 1}^{l - k + 1} \left(\xi_j \right)^{\alpha_j}, 
\end{align} 
where $ C_{l,\alpha} = \frac{l!}{\prod_{j=1}^{j=l}\alpha_j! (j!)^{\alpha_j}} $ and 
\begin{align} 
  \mathcal{A}_{l, k} = \left\{\alpha = (\alpha_1, \ldots, \alpha_l) \in (\mathbb{N}\cup \{0\})^l \relmiddle \sum_{i=1}^{l} i \alpha_i = l, \quad \sum_{i=1}^{l} \alpha_i = k \right\}. 
\end{align} 
In this section, we often use the Fa\`a di Bruno's formula
\begin{align}
  \partial_x^l (v \circ w) = \sum_{k = 1}^{l} \left((\partial_x^k v) \circ w\right) \mathcal{B}_{l, k} (w), \label{fdb}
\end{align} 
where $ \mathcal{B}_{l, k} (w) = B_{l, k} (\partial_x w, \ldots, \partial_x^{l - k + 1} w) $. 

\begin{lemma} \label{lem:s2} 
  Let $ v \in H^s (\mathbb{T}) $ and $ f \in W^{s, \infty} (\mathbb{T}) $ for an integer $ s \geq 2 $. Let $ \Delta t_1 $ be as defined in Lemma~\ref{lem:s}. Then there exists a positive constant $ C_{\mathrm{P}2} $ which depends polynomially on $ \norm{s, \infty}{f} $ and $ \norm{s,2,\ast}{v} $ such that if $ \Delta t < \Delta t_1 $, 
  \begin{align} 
    \norm{s,2,\ast}{\mathcal{S}_{\Delta t}^{\mathrm{A}} (v)} \leq \left(1 + C_{\mathrm{P}2} \Delta t \right) \norm{s,2,\ast} {v}. \label{a2} 
  \end{align} 
\end{lemma}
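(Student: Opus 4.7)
The plan is to closely follow the structure of the proof of Lemma~\ref{lem:s}: write $w = \mathcal{S}_{\Delta t}^{\mathrm{A}}(v)$, so that $w = v \circ X$ with $X = X_{\Delta t}^1[f \circ w]$, and reuse the pointwise bounds obtained there, namely $\snorm{1,\infty}{w} \leq (3/2)\snorm{1,\infty}{v}$, $1/2 \leq \partial_x X \leq 3/2$, and $\norm{0,\infty}{1/\gamma} \leq 1 + C\Delta t$ for the quantity $\gamma$ in \eqref{a35}, with $C$ polynomial in $\snorm{1,\infty}{f}$ and $\snorm{1,\infty}{v}$. Since $s \geq 2$, the one-dimensional Sobolev embedding $H^s(\mathbb{T}) \hookrightarrow W^{s-1,\infty}(\mathbb{T})$ lets me replace every occurrence of $\snorm{j,\infty}{v}$ for $j \leq s-1$ by a constant times $\norm{s,2,\ast}{v}$ inside such constants. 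The $L^2$-part follows immediately from the change of variables $y = X(x)$ applied to $\norm{0,2}{w}^2 = \int_\mathbb{T} |v(X(x))|^2 \, dx$, which gives $\norm{0,2}{w}^2 \leq \norm{0,\infty}{1/\partial_x X}\,\norm{0,2}{v}^2 \leq (1 + C\Delta t)\norm{0,2}{v}^2$.

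For the $H^s$-seminorm I would apply the Fa\`a di Bruno formula \eqref{fdb} to $w = v \circ X$ and split out the top-order term:
\begin{align}
\partial_x^s w = ((\partial_x^s v) \circ X)(\partial_x X)^s + \sum_{k=1}^{s-1} ((\partial_x^k v) \circ X)\,\mathcal{B}_{s,k}(X).
\end{align}
Because $\partial_x^j X = -\Delta t\, \partial_x^j(f \circ w)$ for every $j \geq 2$, each monomial in $\mathcal{B}_{s,k}(X)$ with $k \leq s-1$ must carry at least one such factor and is therefore of order $\Delta t$. The only occurrence of $\partial_x^s w$ on the right comes from the $k = 1$ contribution, through the factor $(f' \circ w)\,\partial_x^s w$ inside $\partial_x^s(f \circ w)$. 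Moving this contribution to the left yields
\begin{align}
\gamma\, \partial_x^s w = ((\partial_x^s v) \circ X)(\partial_x X)^s + E,
\end{align}
where $\gamma$ is exactly the quantity in \eqref{a35}, and $E$ collects the remaining terms, every one of which carries an explicit factor $\Delta t$ and involves only $\partial_x^j w$ for $1 \leq j \leq s-1$ together with derivatives of $v$ and $f$.

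Next I would take $L^2$-norms term by term. Using $\norm{0,\infty}{1/\gamma} \leq 1 + C\Delta t$ and a further change of variables, the main term satisfies $\norm{0,2}{((\partial_x^s v) \circ X)(\partial_x X)^s}^2 \leq \norm{0,\infty}{\partial_x X}^{2s-1}\snorm{s,2}{v}^2 \leq (1 + C\Delta t)\snorm{s,2}{v}^2$. For $E$ I would bound each term by placing one derivative of $w$ in $L^2$ and the remaining derivatives in $L^\infty$; invoking once more $H^s \hookrightarrow W^{s-1,\infty}$ yields $\norm{0,2}{E} \leq \Delta t\, P(\norm{s,2,\ast}{w}, \norm{s,\infty}{f}, \norm{s,2,\ast}{v})$ for a polynomial $P$ that has no constant term in its first argument, since every monomial of $E$ contains at least one derivative of $w$ and vanishes when $w \equiv 0$. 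Combining with the $L^2$-estimate produces the implicit inequality
\begin{align}
\norm{s,2,\ast}{w}^2 \leq (1 + C\Delta t)\norm{s,2,\ast}{v}^2 + \Delta t\, Q(\norm{s,2,\ast}{w}, \norm{s,\infty}{f}, \norm{s,2,\ast}{v}).
\end{align}

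The main obstacle will be closing this implicit inequality in $\norm{s,2,\ast}{w}$. My plan is a continuity-in-$\Delta t$ bootstrap: at $\Delta t = 0$ one has $w = v$, so $\norm{s,2,\ast}{w} = \norm{s,2,\ast}{v}$, and $\Delta t \mapsto w_{\Delta t}$ depends continuously on the parameter in $H^s$, so the set on which $\norm{s,2,\ast}{w_{\Delta t}} \leq 2 \norm{s,2,\ast}{v}$ is relatively open and nonempty in $(0, \Delta t_1)$. On that set the $Q$-term is dominated by a polynomial in $\norm{s,\infty}{f}$ and $\norm{s,2,\ast}{v}$ alone, whence the inequality self-improves to $\norm{s,2,\ast}{w}^2 \leq (1 + C_{\mathrm{P}2}\Delta t)\norm{s,2,\ast}{v}^2$ with $C_{\mathrm{P}2}$ polynomial in $\norm{s,\infty}{f}$ and $\norm{s,2,\ast}{v}$; a standard open-closed argument then extends the bootstrap to all of $(0, \Delta t_1)$, possibly after reducing $\Delta t_1$ within the same polynomial dependence. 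An alternative route is an induction on $s$ using $s = 2$ as the base case, where $\partial_x^2 w$ has only two terms in its Fa\`a di Bruno expansion and can be handled by a direct computation.
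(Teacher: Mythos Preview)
Your bootstrap route has a genuine gap. Closing the open--closed argument---showing that the set where $\norm{s,2,\ast}{w_{\Delta t}} \leq 2\norm{s,2,\ast}{v}$ exhausts $(0,\Delta t_1)$---requires $\Delta t$ small enough that $(1+C\Delta t)\norm{s,2,\ast}{v}^2 + \Delta t\,Q < 4\norm{s,2,\ast}{v}^2$; the resulting threshold depends on $Q$, hence on the full $\norm{s,\infty}{f}$ and $\norm{s,2,\ast}{v}$, not merely on $\snorm{1,\infty}{f}\snorm{1,\infty}{v}$ as $\Delta t_1$ does. You acknowledge this (``possibly after reducing $\Delta t_1$''), but the lemma claims the bound for \emph{every} $\Delta t < \Delta t_1$ with the specific $\Delta t_1$ of Lemma~\ref{lem:s}, so the reduction is not permitted. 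A second, lesser issue is that $H^s$-continuity of $\Delta t \mapsto w_{\Delta t}$ is asserted but not justified; the implicit function theorem in Lemma~\ref{lem:s} yields only pointwise control, and spelling out $H^s$-continuity would likely require bounds on $\partial_x^j w$ of exactly the kind you are trying to establish.

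The paper avoids any bootstrap. It first proves, by induction on $k$ from $2$ to $s-1$, the $W^{k,\infty}$ bound $\norm{k,\infty}{w} \leq (1 + C_{\mathrm{P}}(\norm{k,\infty}{f},\norm{k,\infty}{v})\,\Delta t)\norm{k,\infty}{v}$: the same $\gamma$-trick you use isolates $\partial_x^k w$ on the left, and the right-hand side then involves only $\partial_x^j w$ with $j \leq k-1$, already controlled \emph{explicitly} by the inductive hypothesis. With these $W^{s-1,\infty}$ bounds on $w$ in hand---depending only on $v$ and $f$, never on $w$---the final $H^s$ estimate proceeds exactly as in your $E$-analysis, except that every lower-order factor $\partial_x^j w$ (for $j\le s-1$) is bounded directly in $L^\infty$ by a polynomial in $\norm{s-1,\infty}{v}$ and $\norm{s,\infty}{f}$. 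No implicit inequality ever arises and no smallness on $\Delta t$ beyond $\Delta t<\Delta t_1$ is invoked. Your suggested alternative (induction on $s$ with $s=2$ as base case) is essentially this argument and would work without the bootstrap's defects.
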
 

\begin{proof} 
  In this proof, we denote $ \mathcal{S}_{\Delta t}^{\mathrm{A}} (v) $ by $ w $. We will show that, for $ k \in \{2, \ldots, s - 1\} $, 
  \begin{align} 
    \norm{k, \infty}{w} \leq \left(1 + C_{\mathrm{P}} (\norm{k, \infty}{f}, \norm{k, \infty}{v} ) \Delta t \right) \norm{k, \infty}{v}. \label{a6} 
  \end{align} 
  First, we prove \eqref{a6} for $ s = 2 $. 
  By differentiating \eqref{a1} twice and using the Fa\`a di Bruno's formula \eqref{fdb}, we have 
  \begin{align}
    \gamma \, \partial_x^2 w & = \left( (\partial_x v) \circ {X_{\Delta t}^1} [f \circ w] \right) \left\{\mathcal{B}_{2,1} ({X_{\Delta t}^1} [f \circ w]) - (f^\prime \circ w) \partial_x^2 w \Delta t \right\} \\ & \quad + \left( (\partial_x^2 v) \circ {X_{\Delta t}^1} [f \circ w] \right) \mathcal{B}_{2,2} ({X_{\Delta t}^1} [f \circ w]), 
  \end{align} 
  where $ \gamma $ is defined by \eqref{a35}. Therefore, we have 
  \begin{align}
    \norm{2, \infty}{\gamma \, \partial_x^2 w} & \leq \snorm{1, \infty}{v} \norm{0, \infty}{\mathcal{B}_{2,1} ({X_{\Delta t}^1} [f \circ w]) - (f^\prime \circ w) \partial_x^2 w \Delta t} \\ & \quad + \snorm{2, \infty}{v} \norm{0, \infty}{\mathcal{B}_{2,2} ({X_{\Delta t}^1} [f \circ w])} \label{a21}
  \end{align}
  By \eqref{a5-2}, we have 
  \begin{align} 
    \norm{0, \infty}{\mathcal{B}_{2,1} ({X_{\Delta t}^1} [f \circ w]) - (f^\prime \circ w) \partial_x^2 w \Delta t} 
    & = \Delta t \norm{0, \infty}{(f^{\prime\prime} \circ w) (\partial_x w)^2} \\
    & = \Delta t \snorm{2, \infty}{f} \snorm{1, \infty}{w}^2 \\
    & \leq \frac{9}{4} \Delta t \snorm{2, \infty}{f} \snorm{1, \infty}{v}^2. 
  \end{align}
  and 
  \begin{align}
    \norm{0, \infty}{\mathcal{B}_{2,2} ({X_{\Delta t}^1} [f \circ w])} 
    & = \norm{0, \infty}{\left(1 + (f^\prime \circ w) \partial_x w\right)^2} \\
    & \leq \left(1 + \frac{3}{2} \Delta t \snorm{1,\infty}{f} \snorm{1,\infty}{v} \right)^2, 
  \end{align}
  Applying these estimates to \eqref{a21}, we obtain 
  \begin{align}
    \norm{2, \infty}{\gamma \, \partial_x^2 w}
    & \leq C_{\mathrm{P}} (\snorm{2, \infty}{f}, \snorm{1, \infty}{v}) \Delta t \snorm{1, \infty}{v}
    + \left(1 + C_{\mathrm{P}} (\snorm{2, \infty}{f}, \snorm{1, \infty}{v}) \Delta t\right) \snorm{2, \infty}{v} \\
    & \leq \left(1 + C_{\mathrm{P}} (\norm{2, \infty}{f}, \norm{2, \infty}{v}) \Delta t\right) \norm{1, \infty}{\partial_x v}. 
    \label{w-w2-seminorm}
  \end{align}
  By \eqref{a11} and this, we obtain 
  \begin{align}
    \snorm{2, \infty}{w} 
    & \leq \norm{0, \infty}{1/\gamma} \norm{2, \infty}{\gamma \, \partial_x^2 w} \\
    & \leq \left(1 + C_{\mathrm{P}} (\norm{2, \infty}{f}, \norm{2, \infty}{v}) \Delta t\right) \norm{1, \infty}{\partial_x v}. 
  \end{align}
  In addition, we have 
  \begin{align} 
    \norm{0,\infty}{w} = \norm{0,\infty}{v \circ X_{\Delta t}^1 [f \circ w]} \leq \norm{0,\infty}{v}. \label{w-sup-norm} 
  \end{align}
  Combining \eqref{a5}, \eqref{w-w2-seminorm} and \eqref{w-sup-norm}, we conclude that \eqref{a6} holds for the case $ k = 2 $. 

  Next, suppose that for some $ l \in \{3,\ldots,s - 1\} $, \eqref{a6} holds for all $ k = 2, \ldots, l - 1 $. By the Fa\`a di Bruno's formula, we have 
  \begin{align}
    \gamma \, \partial_x^l w
    & = ((\partial_x v) \circ {X_{\Delta t}^1} [f \circ w]) \left\{\mathcal{B}_{l, 1} ({X_{\Delta t}^1} [ f \circ w]) - (f^\prime\circ w) \partial_x^l w \Delta t \right\} \\
    & \quad + \sum_{k=2}^{l} ((\partial_x^k v) \circ {X_{\Delta t}^1} [f \circ w]) \mathcal{B}_{l, k} ({X_{\Delta t}^1} [ f \circ w]). \label{a3} 
  \end{align} 

  We now estimate $ \mathcal{B}_{l, k} ({X_{\Delta t}^1} [ f \circ w]) $ ($ k = 1, \ldots, l $) by considering the following cases separately: (i) $k = 1$, (ii) $k = 2, \ldots, l-1$, and (iii) $ k = l $.
  
  First, we derive the bound for $ \norm{0, \infty}{\mathcal{B}_{l, 1} ({X_{\Delta t}^1} [ f \circ w]) - (f^\prime\circ w) \partial_x^l w} $. By the Fa\`a di Bruno's formula \eqref{fdb}, we have 
  \begin{align}
    \mathcal{B}_{l, 1} ({X_{\Delta t}^1} [ f \circ w]) - (f^\prime\circ w) \partial_x^l w \Delta t
    & = \partial_x^l (f \circ w) \Delta t - (f^\prime\circ w) \partial_x^l w \Delta t \\
    & = \sum_{k=1}^l ((\partial_x^k f) \circ w) \mathcal{B}_{l,k}(w) \Delta t - (f^\prime\circ w) \partial_x^l w \Delta t \\
    & = \sum_{k=2}^l ((\partial_x^k f) \circ w) \mathcal{B}_{l,k}(w) \Delta t, 
  \end{align}
  where we used the identity $ \mathcal{B}_{l,1}(g) = \partial_x^l g $ in the first and the last equalities. 
  Since we have the estimate 
  \begin{align}
    \norm{0,\infty}{\mathcal{B}_{l,k}(w)} \leq C_{\mathrm{P}} (\norm{l - 1, \infty}{w}) 
  \end{align}
  for $ k = 2, \ldots, l $, we obtain 
  \begin{align}
    \norm{0, \infty}{\mathcal{B}_{l, 1} ({X_{\Delta t}^1} [ f \circ w]) - (f^\prime\circ w) \partial_x^l w \Delta t}
    \leq C_{\mathrm{P}} (\norm{l, \infty}{f}, \norm{l - 1, \infty}{w}) \Delta t. 
  \end{align} 
  By the induction hypothesis, this becomes 
  \begin{align}
    \norm{0, \infty}{\mathcal{B}_{l, 1} ({X_{\Delta t}^1} [ f \circ w]) - (f^\prime\circ w) \partial_x^l w \Delta t} 
    \leq C_{\mathrm{P}} (\norm{l, \infty}{f}, \norm{l - 1, \infty}{v}) \Delta t. \label{a19} 
  \end{align} 
  
  Second, we estimate $ \mathcal{B}_{s, k} ({X_{\Delta t}^1} [ f \circ w]) $ for $ k = 2, \ldots, l - 1 $. Since every term in the Bell polynomial $ B_{l, k} (\xi_1, \ldots, \xi_{l - k + 1}) $ is of degree at least one with respect to the variables $ \xi_2, \ldots, \xi_{l - k + 1} $, we have 
  \begin{align} 
    & \norm{0, \infty}{\mathcal{B}_{l, k}({X_{\Delta t}^1} [f \circ w])} \\ 
    & = \norm{0, \infty}{B_{l, k}(1 - \partial_x (f \circ w) \Delta t, - \partial_x^2 (f \circ w) \Delta t, \ldots, - \partial_x^{l - k + 1} (f \circ w) \Delta t)} \\
    & \leq C_{\mathrm{P}} (\norm{l - k + 1, \infty}{f \circ w}) \Delta t. \label{a22} 
  \end{align} 
  By the induction hypothesis, we have $ \norm{l - k + 1, \infty}{w} \leq C_{\mathrm{P}} (\norm{l - k + 1, \infty}{f}, \norm{l - k + 1, \infty}{v}) $ for all $ k = 2, \ldots, l - 1 $. Therefore we obtain 
  \begin{align} 
    \norm{l - k + 1, \infty}{f \circ w} 
    & \leq C_{\mathrm{P}} (\norm{l - k + 1, \infty}{f}, \norm{l - k + 1, \infty}{w}) \\ 
    & \leq C_{\mathrm{P}} (\norm{l - k + 1, \infty}{f}, \norm{l - k + 1, \infty}{v}). 
  \end{align} 
  Thus, it follows from \eqref{a22} that for $ k = 2, \ldots, l - 1 $, 
  \begin{align} 
    \norm{0, \infty}{\mathcal{B}_{l, k}({X_{\Delta t}^1} [f \circ w])} & \leq C_{\mathrm{P}} (\norm{l - 1, \infty}{f}, \norm{l - 1, \infty}{v}) \Delta t. \label{a18} 
  \end{align} 

  Finally, we estimate $ \mathcal{B}_{l, l} ({X_{\Delta t}^1} [ f \circ w]) $ as 
  \begin{align} 
    \norm{0, \infty}{\mathcal{B}_{l, l} ({X_{\Delta t}^1} [ f \circ w])}& = \norm{0, \infty}{\left(\partial_x ({X_{\Delta t}^1} [ f \circ w])\right)^l} \\ & \leq \left(1 + \Delta t \snorm{1, \infty}{f} \snorm{1, \infty}{w}\right)^l. \label{a17-2} 
  \end{align}
  By \eqref{a5-2} and $ \Delta t \leq 1 $, we have 
  \begin{align}
    \norm{0, \infty}{\mathcal{B}_{l, l} ({X_{\Delta t}^1} [ f \circ w])} & \leq 1 + C_{\mathrm{P}} (\snorm{1, \infty}{f}, \snorm{1, \infty}{v}) \Delta t. \label{a17} 
  \end{align} 

  Applying these three estimates \eqref{a19}, \eqref{a18} and \eqref{a17} to \eqref{a3}, we obtain 
  \begin{align} 
    \norm{0, \infty}{\gamma \, \partial_x^l w} & \leq  \left(1 + C_{\mathrm{P}} (\norm{l, \infty}{f}, \norm{l - 1, \infty}{v}) \Delta t\right) \norm{l, \infty}{v}. 
  \end{align} 
  By \eqref{a11} and this, we have 
  \begin{align} 
    \snorm{l, \infty}{w} \leq  (1 + C_{\mathrm{P}} (\norm{l, \infty}{f}, \norm{l - 1, \infty}{v}) \Delta t) \norm{l, \infty}{v}. 
  \end{align} 
  Combining this with the induction hypothesis, we deduce
  \begin{align}
    \norm{l, \infty}{w} \leq (1 + C_{\mathrm{P}} (\norm{l, \infty}{f}, \norm{l - 1, \infty}{v}) \Delta t) \norm{l, \infty}{v}. 
  \end{align} 
  Therefore, by the induction on $ l $, the estimate \eqref{a6} holds for all $ l = 2,\ldots, s - 1 $. 

  In the following, we prove \eqref{a2}. 
  Since $ {X_{\Delta t}^1} [f \circ w] $ is an bijection from $ \mathbb{T} $ to $ \mathbb{T} $, it has the inverse, which we denote by $  ({X_{\Delta t}^1} [f \circ w])^{-1} $. For any $ \phi \in L^2(\mathbb{T}) $, we have 
  \begin{align}
    \norm{0, 2}{\phi \circ {X_{\Delta t}^1} [f \circ w]}^2 & = \int_{\mathbb{T}} \phi ({X_{\Delta t}^1} [f \circ w] (x))^2 \, \mathrm{d} x \\ 
    & = \int_{\mathbb{T}} \phi (y)^2 | ({X_{\Delta t}^1} [f \circ w])^{-1} (y) | \,\mathrm{d} y \\ 
    & \leq \norm{0, \infty}{({X_{\Delta t}^1} [f \circ w])^{-1}} \norm{0, 2}{\phi}^2. \label{a12-2}
  \end{align} 
  By \eqref{a5-2} and $ \snorm{1,\infty}{f}\snorm{1,\infty}{v}\Delta t < 1/3 $, we have 
  \begin{align}
    \norm{0, \infty}{({X_{\Delta t}^1} [f \circ w])^{-1}}^{1 / 2} \leq \left(1 - \frac{3}{2} \Delta t \snorm{1,\infty}{f}\snorm{1,\infty}{v}\right)^{-1/2} \leq 1 + 3 \Delta t \snorm{1,\infty}{f}\snorm{1,\infty}{v}. 
  \end{align}
  Using this, \eqref{a12-2} becomes 
  \begin{align}
    \norm{0, 2}{\phi \circ {X_{\Delta t}^1} [f \circ w]} \leq (1 + 3 \Delta t \snorm{1,\infty}{f}\snorm{1,\infty}{v}) \norm{0, 2}{\phi}. 
    \label{a12}
  \end{align}
  Substituting $ \phi = v $, we obtain 
  \begin{align}
    \norm{0, 2}{w} \leq (1 + 3 \Delta t \snorm{1,\infty}{f}\snorm{1,\infty}{v}) \norm{0, 2}{v}. \label{a15} 
  \end{align} 

  We estimate $ \snorm{s, 2}{w} $. By \eqref{a3}, we have 
  \begin{align} 
    & \norm{0, 2}{\gamma \, \partial_x^s w} \\ & \leq \norm{0, 2}{(\partial_x v) \circ {X_{\Delta t}^1}[f \circ w]} \norm{0, \infty}{\mathcal{B}_{s, 1} ({X_{\Delta t}^1} [f \circ w]) - (f^\prime \circ w) \partial_x^s w \Delta t} \\ 
    & \quad + \sum_{k = 2}^{s - 1} \norm{0, 2}{(\partial_x^k v) \circ {X_{\Delta t}^1}[f \circ w]} \norm{0, \infty}{\mathcal{B}_{s, k} ({X_{\Delta t}^1} [f \circ w])} \\
    & \quad + \norm{0, 2}{(\partial_x^s v) \circ {X_{\Delta t}^1}[f \circ w]} \norm{0, \infty}{\mathcal{B}_{s, s} ({X_{\Delta t}^1} [f \circ w])}. \label{a20} 
  \end{align} 

  We now estimate each term on the right-hand side of \eqref{a20}. 
  Proceeding as in \eqref{a19}, \eqref{a18} and \eqref{a17}, we can derive 
  \begin{gather} 
    \norm{0, \infty}{\mathcal{B}_{s, 1} ({X_{\Delta t}^1} [f \circ w]) - (f^\prime \circ w) \partial_x^l w \Delta t} \leq C_{\mathrm{P}} (\norm{s,\infty}{f}, \norm{s-1,\infty}{v}) \Delta t, \label{a19-2} \\
    \norm{0, \infty}{\mathcal{B}_{s, k} ({X_{\Delta t}^1} [f \circ w])} \leq C_{\mathrm{P}} (\norm{s - 1,\infty}{f}, \norm{s - 1,\infty}{v}) \Delta t \label{a18-2} 
  \end{gather} 
  for $ k = 2, \ldots, s - 1 $, and 
  \begin{align}
    \norm{0, \infty}{\mathcal{B}_{s, s} ({X_{\Delta t}^1} [f \circ w])} \leq 1 + C_{\mathrm{P}} (\snorm{1,\infty}{f}, \snorm{1,\infty}{v}) \Delta t. \label{a17-3} 
  \end{align} 
  Using \eqref{a12}, \eqref{a19-2} and \eqref{a23-emb}, we obtain the bound for the first term on the right-hand side of \eqref{a20} as 
  \begin{align} 
    & \norm{0, 2}{(\partial_x v) \circ {X_{\Delta t}^1}[f \circ w]} \norm{0, \infty}{\mathcal{B}_{s, 1} ({X_{\Delta t}^1} [f \circ w]) - (f^\prime \circ w) \partial_x^s w} \\ & \leq C_{\mathrm{P}} (\norm{s,\infty}{f}, \norm{s, 2}{v}) \Delta t \snorm{1, 2}{v}. \label{a26} 
  \end{align} 
  For the second term of \eqref{a20}, combining \eqref{a12}, \eqref{a18-2} and \eqref{a23-emb} gives, for $ k = 2,\ldots, s - 1 $, 
  \begin{align} 
    & \norm{0, 2}{(\partial_x^k v) \circ {X_{\Delta t}^1}[f \circ w]} \norm{0, \infty}{\mathcal{B}_{s, k} ({X_{\Delta t}^1} [f \circ w])} \\ & \leq C_{\mathrm{P}} (\norm{s - 1,\infty}{f}, \norm{s, 2}{v}) \Delta t \snorm{k, 2}{v}. \label{a25} 
  \end{align} 
  Regarding the third term in \eqref{a20}, by \eqref{a12}, \eqref{a17-3} and \eqref{a23-emb}, we obtain 
  \begin{align} 
    & \norm{0, 2}{(\partial_x^s v) \circ {X_{\Delta t}^1}[f \circ w]} \norm{0, \infty}{\mathcal{B}_{s, s} ({X_{\Delta t}^1} [f \circ w])} \\ & \leq \left(1 + C_{\mathrm{P}} (\snorm{1,\infty}{f}, \norm{2, 2}{v}) \Delta t\right) \snorm{s, 2}{v}. \label{a24} 
  \end{align} 
  By applying \eqref{a26}, \eqref{a25} and \eqref{a24} to \eqref{a20}, we obtain 
  \begin{align} 
    \norm{0, 2}{\gamma \, \partial_x^s w} & \leq \snorm{s, 2}{v} + C_{\mathrm{P}} (\norm{s, \infty}{f}, \norm{s, 2}{v}) \norm{s, 2}{v} \Delta t. 
  \end{align} 
  By \eqref{a11} and this, we have 
  \begin{align}
    \snorm{s, 2}{w} \leq \snorm{s, 2}{v} + C_{\mathrm{P}} (\norm{s, \infty}{f}, \norm{s, 2}{v}) \norm{s, 2}{v} \Delta t. \label{a33} 
  \end{align}
  By combining \eqref{a15} and \eqref{a33}, we obtain 
  \begin{align}
    \norm{s, 2, \ast}{w} & = \left(\norm{0, 2}{w}^2 + \snorm{s, 2}{w}^2\right)^{1/2} \\ & \leq \left[\left(\norm{0, 2}{v} + C_{\mathrm{P}} (\snorm{1, \infty}{f}, \norm{2, 2}{v}) \Delta t \norm{0, 2}{v} \right)^2 \right. \\ & \qquad \left. + \left(\snorm{s, 2}{v} + C_{\mathrm{P}} (\norm{s, \infty}{f}, \norm{s, 2}{v}) \Delta t \norm{s,2}{v}\right)^2 
    \right]^{1/2}. \label{a31} 
  \end{align} 
  We use the triangle inequality for the Euclidean norm 
  \begin{align}
    \left((a_1 + b_1)^2 + (a_2 + b_2)^2\right)^{1/2} \leq (a_1^2 + a_2^2)^{1/2} + (b_1^2 + b_2^2)^{1/2} \label{a32}
  \end{align} 
  for any $ a_1, a_2, b_1, b_2 \in \mathbb{R} $. 
  Applying \eqref{a32} to \eqref{a31}, we obtain 
  \begin{align}
    \norm{s, 2, \ast}{w} & \leq \left(\norm{0, 2}{v}^2 + \snorm{s, 2}{v}^2\right)^{1/2} + C_{\mathrm{P}} (\snorm{s, \infty}{f}, \norm{s, 2}{v}) \Delta t \norm{s,2}{v}, 
  \end{align}
  which implies \eqref{a2}. 
\end{proof} 

\begin{lemma}\label{lem:a1} 
  Let $ v \in H^s (\mathbb{T}) $ and $ f \in W^{s, \infty} (\mathbb{T}) $ for some integer $ s \geq 2 $. Then there exists a positive constant $ C_{\mathrm{P}3} $ which depends polynomially on $ \norm{s, 2, \ast}{v} $ such that 
  \begin{align} 
    \norm{s, 2, \ast}{f \circ v} \leq C_{\mathrm{P}3} \norm{s, \infty}{f}. \label{ap1} 
  \end{align} 
  
  Moreover, let $ v_1 $, $v_2 \in H^s (\mathbb{T}) $ and $ f \in W^{s + 1, \infty} (\mathbb{T}) $ for some integer $ s \geq 2 $. Then there exists a positive constant $ C_{\mathrm{P}4} $ which depends polynomially on $ \norm{s - 1, 2}{\partial_x v_1} $ and $ \norm{s - 1, 2} {\partial_x v_2} $ such that 
  \begin{align} 
    \norm{s, 2, \ast}{f \circ v_1 - f \circ v_2} \leq C_{\mathrm{P}4} \norm{s + 1, \infty}{f} \norm{s, 2, \ast}{v_1 - v_2}. \label{ap2} 
  \end{align} 
\end{lemma}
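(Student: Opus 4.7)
The plan is to prove both estimates by applying Fa\`a di Bruno's formula \eqref{fdb} and the Sobolev embedding \eqref{a23-emb}, with the main care being the book-keeping of which norms of $v$, $v_1$, $v_2$ appear as coefficients.

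For \eqref{ap1}, I split $\norm{s,2,\ast}{f\circ v}^2 = \norm{0,2}{f\circ v}^2 + \snorm{s,2}{f\circ v}^2$. The $L^2$ part is bounded trivially by $\norm{0,\infty}{f}^2$ using $|\mathbb{T}|=1$. For the seminorm, \eqref{fdb} yields $\partial_x^s(f\circ v) = \sum_{k=1}^s ((\partial_x^k f)\circ v)\,\mathcal{B}_{s,k}(v)$, and I estimate each summand in $L^2$. When $k=1$ we have $\mathcal{B}_{s,1}(v) = \partial_x^s v$ and the bound $\norm{1,\infty}{f}\snorm{s,2}{v}$ is direct. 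When $k\geq 2$, every monomial in $\mathcal{B}_{s,k}(v)$ is a product of factors $\partial_x^j v$ with $j\leq s-k+1\leq s-1$, so each factor can be placed in $L^\infty$ via \eqref{a23-emb}, giving $\norm{0,\infty}{\partial_x^j v}\leq C\norm{s,2,\ast}{v}$. Summing over monomials produces $\norm{0,2}{\mathcal{B}_{s,k}(v)}\leq C_{\mathrm{P}}(\norm{s,2,\ast}{v})$, and \eqref{ap1} follows.

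For \eqref{ap2}, I apply \eqref{fdb} to both $v_1$ and $v_2$ and telescope. The $L^2$ part is handled by the mean value theorem: $\norm{0,2}{f\circ v_1 - f\circ v_2}\leq \norm{1,\infty}{f}\norm{0,2}{v_1-v_2}$. For the seminorm, write
\begin{align*}
\partial_x^s(f\circ v_1 - f\circ v_2) = \sum_{k=1}^s \Bigl\{(f^{(k)}\circ v_1)[\mathcal{B}_{s,k}(v_1) - \mathcal{B}_{s,k}(v_2)] + [(f^{(k)}\circ v_1) - (f^{(k)}\circ v_2)]\,\mathcal{B}_{s,k}(v_2)\Bigr\}.
\end{align*}
In each bracket the factor involving $f$ is bounded in $L^\infty$: $\norm{0,\infty}{f^{(k)}\circ v_1}\leq \norm{k,\infty}{f}$, while the mean value theorem combined with \eqref{a23-emb} gives $\norm{0,\infty}{f^{(k)}\circ v_1 - f^{(k)}\circ v_2}\leq C\norm{k+1,\infty}{f}\,\norm{s,2,\ast}{v_1-v_2}$. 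The Bell polynomial difference $\mathcal{B}_{s,k}(v_1) - \mathcal{B}_{s,k}(v_2)$ is multilinear in the derivatives of $v_1,v_2$ and can be written as a telescoping sum in its product factors; the result is estimated in $L^2$ exactly as in the proof of \eqref{ap1} (lower-order derivatives in $L^\infty$, top-order derivative in $L^2$), pulling out one factor of $\norm{s,2,\ast}{v_1-v_2}$ together with polynomial dependence on $\norm{s-1,2}{\partial_x v_1}$ and $\norm{s-1,2}{\partial_x v_2}$. Similarly $\norm{0,2}{\mathcal{B}_{s,k}(v_2)}\leq C_{\mathrm{P}}(\norm{s-1,2}{\partial_x v_2})$.

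The main obstacle is verifying that $C_{\mathrm{P}4}$ depends only on $\norm{s-1,2}{\partial_x v_1}$ and $\norm{s-1,2}{\partial_x v_2}$, not on $\norm{0,2}{v_i}$. This is secured by the structural observation that $\mathcal{B}_{s,k}(v)$ involves only $\partial_x v,\ldots,\partial_x^{s-k+1} v$ and never $v$ itself, so the Sobolev bound $\norm{0,\infty}{\partial_x^j v_i}\leq C\norm{s-1,2}{\partial_x v_i}$ for $1\leq j\leq s-1$ suffices for every factor that needs to be placed in $L^\infty$, and $\norm{0,2}{v_i}$ never enters the estimate.
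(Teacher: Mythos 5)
Your proposal is correct and follows essentially the same route as the paper: the $L^\infty$ bound of the $L^2$ part, Fa\`a di Bruno's formula with the $k=1$ term isolated and the $k\geq 2$ Bell polynomials placed in $L^\infty$ via the embedding \eqref{a23-emb}, and for \eqref{ap2} the same telescoping into $(f^{(k)}\circ v_1)[\mathcal{B}_{s,k}(v_1)-\mathcal{B}_{s,k}(v_2)]$ plus $[(f^{(k)}\circ v_1)-(f^{(k)}\circ v_2)]\mathcal{B}_{s,k}(v_2)$ with the mean value theorem and embedding. Your closing observation that $\mathcal{B}_{s,k}$ involves only $\partial_x v,\ldots,\partial_x^{s-k+1}v$, so the constant depends only on $\norm{s-1,2}{\partial_x v_i}$, is exactly the point the paper exploits as well.
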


\begin{proof}
  We first estimate $ \norm{0, 2}{f \circ v} $ as 
  \begin{align}
    \norm{0, 2}{f \circ v} \leq \norm{0, \infty}{f \circ v} \leq \norm{0, \infty}{f}. \label{ap11}
  \end{align}

  To estimate the seminorm $ \snorm{s, 2}{f \circ v} $, we need the bounds for the Bell polynomials $ \mathcal{B}_{s, k} (v) $ ($ k = 1, \ldots, s $), which appears the Fa\`a di Bruno's formula \eqref{fdb}. 
  
  For $ k = 1 $, since $ \mathcal{B}_{s, 1} (v) = \partial_x^s v $, we have 
  \begin{align}
    \norm{0, 2}{\mathcal{B}_{s, 1} (v)} = \snorm{s, 2}{v}. \label{ap8}
  \end{align} 
  
  For $ k = 2, \ldots, s $, since each $ \mathcal{B}_{s, k} (v) = B_{s, k} (\partial_x v, \ldots, \partial_x^{s - k + 1} v) $ is a polynomial of $ (\partial_x v, \ldots, \partial_x^{s - 1} v) $, we have 
  \begin{align} 
    \norm{0, 2}{\mathcal{B}_{s, k} (v)} & \leq \norm{0, \infty}{\mathcal{B}_{s, k} (v)} \\ & \leq C_{\mathrm{P}} (\norm{s - 2, \infty}{\partial_x v}) \norm{s - 2, \infty}{\partial_x v} \\ & \leq C_{\mathrm{P}} (\norm{s - 1, 2}{\partial_x v}) \norm{s - 1, 2}{\partial_x v}, \label{ap10} 
  \end{align} 
  where we used the embedding \eqref{a23-emb}. 
  
  Applying \eqref{ap8} and \eqref{ap10} to the Fa\`a di Bruno's formula, we obtain 
  \begin{align} 
    \snorm{s, 2}{f \circ v} \leq \sum_{k=1}^{s} \snorm{k, \infty}{f} \norm{0, 2}{\mathcal{B}_{s, k} (v)} \leq C_{\mathrm{P}} (\norm{s, 2, \ast}{v}) \norm{s, \infty}{f}. \label{ap12} 
  \end{align} 
  Combining \eqref{ap11} and \eqref{ap12}, we obtain \eqref{ap1}. 

  Next, we consider \eqref{ap2}. We can estimate $ \norm{0, 2}{f \circ v_1 - f \circ v_2} $ as 
  \begin{align}
    \norm{0, 2}{f \circ v_1 - f \circ v_2} \leq \snorm{1, \infty}{f} \norm{0, 2}{v_1 - v_2} \label{ap3}
  \end{align} 
  (see also Lemma~4.5 in~\cite{00AcGu}). 

  Next, we estimate $ \snorm{s, 2}{f \circ v_1 - f \circ v_2} $. By the Fa\`a di Bruno's formula, we have 
  \begin{align}
    & \snorm{s, 2}{f \circ v_1 - f \circ v_2} \\
    & \leq \sum_{k=1}^s \norm{0, 2}{((\partial_x^k f) \circ v_1) \mathcal{B}_{s, k} (v_1) - ((\partial_x^k f) \circ v_2) \mathcal{B}_{s, k} (v_2)} \\
    & \leq \sum_{k=1}^s \norm{0, \infty}{(\partial_x^k f) \circ v_1} \norm{0, 2}{\mathcal{B}_{s, k} (v_1) - \mathcal{B}_{s, k} (v_2)} \\
    & \quad + \sum_{k=1}^s \norm{0, \infty}{(\partial_x^k f) \circ v_1 - (\partial_x^k f) \circ v_2} \norm{0, 2}{\mathcal{B}_{s, k} (v_2)}. \label{ap4} 
  \end{align} 

  For $ k = 1 $, since $ \mathcal{B}_{s, 1} (v) = \partial_x^s v $, we have 
  \begin{align} 
    \norm{0, 2}{\mathcal{B}_{s, 1} (v_1) - \mathcal{B}_{s, 1} (v_2)} = \snorm{s, 2}{v_1 - v_2}. \label{ap14} 
  \end{align} 
  For $ k = 2, \ldots, s $, since $ \mathcal{B}_{s, k} (v) $ is a polynomial of $ (\partial_x v, \ldots, \partial_x^{s - 1} v) $, we have 
  \begin{align} 
    \norm{0, 2}{\mathcal{B}_{s, k} (v_1) - \mathcal{B}_{s, k} (v_2)} \leq C_{\mathrm{P}} (\norm{s - 2, \infty}{\partial_x v_1}, \norm{s - 2, \infty}{\partial_x v_2}) \norm{s - 1, 2}{v_1 - v_2}. 
  \end{align} 
  By \eqref{a23-emb}, we obtain 
  \begin{align} 
    \norm{0, 2}{\mathcal{B}_{s, k} (v_1) - \mathcal{B}_{s, k} (v_2)} \leq C_{\mathrm{P}} (\norm{s - 1, 2}{\partial_x v_1}, \norm{s - 1, 2}{\partial_x v_2}) \norm{s - 1, 2}{v_1 - v_2}. \label{ap13} 
  \end{align} 
  Combining with \eqref{ap14}, we conclude that \eqref{ap13} holds for all $ k = 1, \ldots, s $. 

  Moreover, by \eqref{a23-emb}, we have for $ k = 1,\ldots,s $ 
  \begin{align}
    \norm{0, \infty}{(\partial_x^k f) \circ v_1 - (\partial_x^k f) \circ v_2} & \leq \snorm{k + 1, \infty}{f} \norm{0, \infty}{v_1 - v_2} \\
    & \leq \snorm{k + 1, \infty}{f} \norm{1, 2}{v_1 - v_2}. \label{ap6}
  \end{align} 
  By \eqref{ap8}, \eqref{ap10} and \eqref{ap6}, we have 
  \begin{align} 
      & \norm{0, \infty}{(\partial_x^k f) \circ v_1 - (\partial_x^k f) \circ v_2} \norm{0, 2}{\mathcal{B}_{s, k} (v_2)} \\ & \leq C_{\mathrm{P}} (\norm{s - 1, 2}{\partial_x v_2}) \snorm{k + 1, \infty}{f} \norm{1, 2}{v_1 - v_2}. \label{ap6-2}
  \end{align} 

  Applying \eqref{ap13} and \eqref{ap6-2} to \eqref{ap4}, we obtain 
  \begin{align}
    & \snorm{s, 2}{f \circ v_1 - f \circ v_2} \\
    & \leq \sum_{k=1}^{s} \left\{C_{\mathrm{P}} (\norm{s - 1, 2}{\partial_x v_1}, \norm{s - 1, 2}{\partial_x v_2}) \snorm{k, \infty}{f} \norm{s, 2}{v_1 - v_2} \right. \\ & \qquad \qquad + \left. C_{\mathrm{P}} (\norm{s - 1, 2}{\partial_x v_2}) \snorm{k + 1, \infty}{f} \norm{1, 2}{v_1 - v_2} \right\} \\
    & \leq C_{\mathrm{P}} (\norm{s - 1, 2}{\partial_x v_1}, \norm{s - 1, 2}{\partial_x v_2}) \norm{s + 1, \infty}{f} \norm{s, 2, \ast}{v_1 - v_2}. 
    \label{ap7}
  \end{align}
  By \eqref{ap3} and \eqref{ap7}, we obtain \eqref{ap2}. 
\end{proof} 

\begin{lemma}\label{lem:c} 
  Let $ v, w \in H^s (\mathbb{T}) $ and $ f \in W^{s,\infty} (\mathbb{T}) $ for $ s \geq 2 $. 
  Suppose that $ w $ satisfies $ \snorm{1, \infty}{w} \leq 1 / (2 \Delta t \snorm{1, \infty}{f}) $. 
  Then there exists a positive constant $ C_{\mathrm{P}5} $ which depends polynomially on $ \norm{s, \infty}{f} $ and $ \norm{s, 2}{w} $ such that 
  \begin{align} 
    \norm{s, 2, \ast}{v \circ {X_{\Delta t}^1}[f \circ w]} \leq (1 + C_{\mathrm{P}5} \Delta t) \norm{s, 2, \ast}{v}. \label{c2} 
  \end{align} 
  Furthermore, suppose there exists a positive constant $ C_{\Delta 1} $ such that $ h^{2s} / \Delta t $. Then there exists a positive constant $ C_{\mathrm{P}6} $ which depends polynomially on $ \norm{s, \infty}{f} $ and $ \norm{s, 2}{w} $ such that 
  \begin{align} 
    \norm{s, 2, \Delta}{v \circ {X_{\Delta t}^1}[f \circ w]} \leq (1 + C_{\mathrm{P}6} \Delta t) \norm{s, 2, \Delta}{v}. \label{c2-2} 
  \end{align} 
\end{lemma}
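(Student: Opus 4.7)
The plan is to follow the same Faà di Bruno pattern used in Lemma~\ref{lem:s2}, applied to the explicit composition $v \circ X_{\Delta t}^1[f \circ w]$. First, I would handle the $L^2$ part by change of variables: since $\Delta t\,\snorm{1,\infty}{f}\,\snorm{1,\infty}{w} \leq 1/2$ by hypothesis, the Jacobian $\partial_x X_{\Delta t}^1[f\circ w] = 1 - f'(w)\,\partial_x w\,\Delta t$ is bounded away from zero and $X_{\Delta t}^1[f\circ w]$ is a diffeomorphism of $\mathbb{T}$. The same substitution argument that produced \eqref{a12} then yields
\begin{align}
  \norm{0,2}{\phi \circ X_{\Delta t}^1[f\circ w]} \leq (1 + C\Delta t)\,\norm{0,2}{\phi}
\end{align}
for any $\phi \in L^2(\mathbb{T})$, with $C$ depending polynomially on $\snorm{1,\infty}{f}$ and $\snorm{1,\infty}{w}$. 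Applying this with $\phi = v$ handles the $L^2$ piece directly.

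For the seminorm, I would expand via \eqref{fdb} and split the sum into the top index $k = s$, the middle indices $2 \leq k \leq s-1$, and the delicate index $k = 1$. For $k = s$, $\mathcal{B}_{s,s}(X_{\Delta t}^1[f\circ w]) = (1 - f'(w)\partial_x w\,\Delta t)^s = 1 + O(\Delta t)$ in $L^\infty$, so combined with the $L^2$ change of variables this term contributes $(1 + C\Delta t)\snorm{s,2}{v}$. For $2 \leq k \leq s-1$, every monomial in $B_{s,k}(\xi_1,\ldots,\xi_{s-k+1})$ contains some $\xi_j$ with $j \geq 2$, and $\partial_x^j X_{\Delta t}^1[f\circ w] = -\partial_x^j(f\circ w)\,\Delta t$ for $j \geq 2$, so $\norm{0,\infty}{\mathcal{B}_{s,k}(X_{\Delta t}^1[f\circ w])} \leq C_{\mathrm{P}}(\norm{s-1,\infty}{f},\norm{s,2}{w})\,\Delta t$, using \eqref{a23-emb} to control $\partial_x^i w$ in $L^\infty$ for $i \leq s-1$. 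The index $k=1$ is delicate because $\mathcal{B}_{s,1}(X_{\Delta t}^1[f\circ w]) = -\partial_x^s(f\circ w)\,\Delta t$ cannot be placed in $L^\infty$ (its top order involves $\partial_x^s w \in L^2$); instead I would bound it in $L^2$ by $C_{\mathrm{P}}(\norm{s,\infty}{f},\norm{s,2}{w})\,\Delta t$ via a Faà di Bruno expansion of $\partial_x^s(f\circ w)$, and pair it with the Sobolev bound $\norm{0,\infty}{(\partial_x v)\circ X_{\Delta t}^1[f\circ w]} \leq C\,\norm{s,2,\ast}{v}$ (valid since $s \geq 2$). Collecting these contributions and combining with the $L^2$ estimate through the Euclidean triangle inequality \eqref{a32}, exactly as at the end of Lemma~\ref{lem:s2}, yields \eqref{c2} after absorbing intermediate seminorms via $\snorm{k,2}{v} \leq C\,\norm{s,2,\ast}{v}$.

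For the weighted estimate \eqref{c2-2}, the strategy is unchanged; what changes is that the seminorm contribution on both sides is scaled by $(h^{2s}/\Delta t)^{1/2}$. The $L^2$ and $k=s$ pieces transfer verbatim to give $(1 + C\Delta t)\norm{s,2,\Delta}{v}$. The main obstacle---and really the only technical point---is re-absorbing the contributions from $1 \leq k \leq s-1$, which take the form $C\Delta t\,(h^{2s}/\Delta t)^{1/2}\snorm{k,2}{v}$, into $C\Delta t\,\norm{s,2,\Delta}{v}$. I would handle this by using $\snorm{k,2}{v}^2 \leq C(\norm{0,2}{v}^2 + \snorm{s,2}{v}^2)$, which gives
\begin{align}
  \Delta t^2\,\frac{h^{2s}}{\Delta t}\,\snorm{k,2}{v}^2 \leq C\Delta t^2\,\frac{h^{2s}}{\Delta t}\,\norm{0,2}{v}^2 + C\Delta t^2\,\frac{h^{2s}}{\Delta t}\,\snorm{s,2}{v}^2 \leq C(C_{\Delta 1}+1)\,\Delta t^2\,\norm{s,2,\Delta}{v}^2,
\end{align}
where the mesh condition $h^{2s}/\Delta t \leq C_{\Delta 1}$ is essential in the first summand. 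A final application of \eqref{a32} combines the $L^2$ and weighted-seminorm pieces and delivers \eqref{c2-2}.
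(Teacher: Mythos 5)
Your proposal is correct and follows essentially the same route as the paper's proof: the change-of-variables bound for the $L^2$ part, the Fa\`a di Bruno expansion with the same index splitting (placing $\mathcal{B}_{s,1}$ in $L^2$ against $(\partial_x v)\circ X_{\Delta t}^1[f\circ w]$ in $L^\infty$, the middle and top Bell polynomials in $L^\infty$), recombination via the Euclidean triangle inequality, and the same use of $h^{2s}/\Delta t \leq C_{\Delta 1}$ to absorb the lower-order contributions into $\norm{s,2,\Delta}{v}$ in the weighted case.
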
 

\begin{proof} 
  In the same manner as \eqref{a12}, we can derive for any $ \phi \in L^2 (\mathbb{T}) $ 
  \begin{align} 
    \norm{0, 2}{\phi \circ {X_{\Delta t}^1} [f \circ w]} \leq (1 + \snorm{1, \infty}{f} \norm{2, 2}{w} \Delta t) \norm{0, 2}{\phi}, \label{c8} 
  \end{align} 
  provided that $  \Delta t \snorm{1, \infty}{f} \snorm{1, \infty}{w} \leq 1 / 2 $. 

  In the following, we estimate $ \snorm{s, 2}{v \circ {X_{\Delta t}^1} [f \circ w]} $. 
  By the Fa\`a di Bruno's formula \eqref{fdb}, we have 
  \begin{align} 
    \snorm{s, 2}{v \circ {X_{\Delta t}^1} [f \circ w]} & \leq \norm{0, \infty}{(\partial_x v) \circ {X_{\Delta t}^1} [f \circ w]} \norm{0, 2}{\mathcal{B}_{s, 1} ({X_{\Delta t}^1} [f \circ w])} \\ & \quad + \sum_{k=2}^{s - 1} \norm{0, 2}{(\partial_x^k v) \circ {X_{\Delta t}^1} [f \circ w]} \norm{0, \infty}{\mathcal{B}_{s, k} ({X_{\Delta t}^1} [f \circ w])} \\ & \quad + \norm{0, 2}{(\partial_x^s v) \circ {X_{\Delta t}^1} [f \circ w]} \norm{0, \infty}{\mathcal{B}_{s, s} ({X_{\Delta t}^1} [f \circ w])}. \label{c3} 
  \end{align} 

  By \eqref{ap12}, we have 
  \begin{align}
    \norm{0, 2}{\mathcal{B}_{s, 1} ({X_{\Delta t}^1} [f \circ w])} = \snorm{s, 2}{f \circ w} \Delta t \leq C_{\mathrm{P}} (\norm{s, 2, \ast}{v}) \Delta t \norm{s, \infty}{f}. \label{c13} 
  \end{align} 
  Using \eqref{a23-emb}, \eqref{c8} and \eqref{c13}, we estimate the first term in the right-hand side of \eqref{c3} as 
  \begin{align}
    & \norm{0, \infty}{(\partial_x v) \circ {X_{\Delta t}^1} [f \circ w]} \norm{0, 2}{\mathcal{B}_{s, 1} ({X_{\Delta t}^1} [f \circ w])} \\ & \leq C_{\mathrm{P}} (\norm{s, \infty}{f}, \norm{s, 2}{v}) \Delta t \norm{2, 2}{v}. \label{c6} 
  \end{align}

  in the same way as \eqref{a18} and \eqref{a17}, we can derive 
  \begin{align} 
    \norm{0, \infty}{\mathcal{B}_{s, k} ({X_{\Delta t}^1} [f \circ w])} \leq C_{\mathrm{P}} (\norm{s - 1, \infty}{f}, \norm{s - 1, \infty}{w}) \label{c12}
  \end{align} 
  for $ k = 2, \ldots, s - 1 $, and 
  \begin{align}
    \norm{0, \infty}{\mathcal{B}_{s, s} ({X_{\Delta t}^1} [f \circ w])} \leq 1 +  C_{\mathrm{P}} (\snorm{1, \infty}{f}, \snorm{1, \infty}{w}) \Delta t. \label{c11}
  \end{align} 
  Using \eqref{a23-emb}, \eqref{c8} and \eqref{c12}, we estimate the second term in the right-hand side of \eqref{c3} as 
  \begin{align} 
    & \sum_{k=2}^{s - 1} \norm{0, 2}{(\partial_x^k v) \circ {X_{\Delta t}^1} [f \circ w]} \norm{0, \infty}{\mathcal{B}_{s, k} ({X_{\Delta t}^1} [f \circ w])} \\
    & \leq C_{\mathrm{P}} (\norm{s - 1, \infty}{f}, \norm{s, 2}{w}) \Delta t \norm{s - 1, 2}{v}. \label{c5}
  \end{align}

  Using \eqref{a23-emb}, \eqref{c8} and \eqref{c11}, we estimate the third term in the right-hand side of \eqref{c3} as 
  \begin{align} 
    & \norm{0, 2}{(\partial_x^s v) \circ {X_{\Delta t}^1} [f \circ w]} \norm{0, \infty}{\mathcal{B}_{s, s} ({X_{\Delta t}^1} [f \circ w])} \\
    & \leq \left(1 + C_{\mathrm{P}} (\snorm{1, \infty}{f}, \snorm{1, \infty}{w}) \Delta t\right) \snorm{s, 2}{v}. \label{c4}
  \end{align} 

  Applying \eqref{c6}, \eqref{c5} and \eqref{c4} to \eqref{c3}, we have 
  \begin{align} 
    \snorm{s, 2}{v \circ  \circ {X_{\Delta t}^1} [f \circ w]} & \leq \snorm{s, 2}{v} + C_{\mathrm{P}} (\norm{s, \infty}{f}, \norm{s, 2}{w}) \Delta t \norm{s, 2, \ast}{v}. \label{c7} 
  \end{align}
  From \eqref{c8} and \eqref{c7}, we obtain 
  \begin{align}
    \norm{s, 2, \ast}{v \circ {X_{\Delta t}^1} [f \circ w]}^2 & \leq \left(\norm{0, 2}{v} + C_{\mathrm{P}} (\snorm{1, \infty}{f}, \norm{2, 2}{w}) \Delta t \norm{0, 2}{v}\right)^2 \\
    & \quad + \left\{\snorm{s, 2}{v} + C_{\mathrm{P}} (\norm{s, \infty}{f}, \norm{s, 2}{w}) \Delta t \norm{s, 2, \ast}{v}\right\}^2. \label{c7-2} 
  \end{align} 
  Applying \eqref{a32} to \eqref{c7-2}, we obtain 
  \begin{align}
    \norm{s, 2, \ast}{v \circ {X_{\Delta t}^1} [f \circ w]}^2 & \leq \left(\norm{s, 2, \ast}{v} +  C_{\mathrm{P}} (\norm{s, \infty}{f}, \norm{s, 2}{w}) \Delta t \norm{s, 2, \ast}{v}\right)^2, 
  \end{align} 
  which implies \eqref{c2}. 

  Combining \eqref{c8} and \eqref{c7}, we also obtain 
  \begin{align}
    & \norm{s, 2, \Delta}{v \circ {X_{\Delta t}^1}[f \circ w]}^2 \\ & \leq \left(\left((1 + \snorm{1, \infty}{f} \norm{2, 2}{w} \Delta t)\right) \norm{0, 2}{v}\right)^2 \\ & \quad + \frac{h^{2s}}{\Delta t} \left(\snorm{s, 2}{v} + C_{\mathrm{P}} (\norm{s, \infty}{f}, \norm{s, 2}{w}) \Delta t \norm{s, 2, \ast}{v}\right)^2 \\
    & \leq \left[\norm{0, 2, \Delta}{v}^2 + C_{\mathrm{P}} (\norm{s, \infty}{f}, \norm{s, 2}{w}) \Delta t \left(\norm{0, 2}{v}^2 + \frac{h^{2s}}{\Delta t} \norm{s, 2, \ast}{v}^2 \right)^{1 / 2}\right]^2. \label{c9}
  \end{align}
  Provided that $ h^{2s} / \Delta t \leq C_{\Delta 1} $, we have for any $ \phi \in H^s (\mathbb{T}) $
  \begin{align}
    \left(\norm{0, 2}{\phi}^2 + \frac{h^{2s}}{\Delta t} \norm{s, 2, \ast}{\phi}^2 \right)^{1 / 2} 
    & \leq \left((1 + C_{\Delta 1}) \norm{0, 2}{\phi}^2 + \frac{h^{2s}}{\Delta t} \snorm{s, 2}{\phi}^2 \right)^{1 / 2} \\
    & \leq (1 + C_{\Delta 1}) \norm{s, 2, \Delta}{\phi}. 
    \label{c10}
  \end{align} 
  Applying \eqref{c10} to \eqref{c9}, we obtain the desired inequality \eqref{c2-2}. 
\end{proof} 

\begin{lemma} \label{lem:d} 
  Let $ w_1 $, $ w_2 \in H^s (\mathbb{T}) $, $ f $, $ v \in W^{s + 1, \infty} (\mathbb{T}) $ for an integer $ s \geq 2 $. Then there exists a positive constant $ C_{\mathrm{P}7} $ which depends polynomially on $ \norm{s + 1, \infty}{f} $, $ \norm{s, 2, \ast}{w_1} $ and $ \norm{s, 2, \ast}{w_2} $ such that 
  \begin{align}
    & \norm{s, 2, \ast}{v \circ {X_{\Delta t}^1} [f \circ w_1] - v \circ {X_{\Delta t}^1} [f \circ w_2]} \\ & \leq C_{\mathrm{P}7} \Delta t \norm{s + 1, \infty}{v} \norm{s, 2, \ast}{w_1 - w_2}. \label{d4} 
  \end{align} 

  Furthermore, suppose that there exists a positive constant $ C_{\Delta 1} $ such that $ h^{2 s} / \Delta t \leq C_{\Delta 1} $. Then there exists a positive constant $ C_{\mathrm{P}8} $ which depends polynomially on $ \norm{s + 1, \infty}{f} $, $ \norm{s, 2, \ast}{w_1} $ and $ \norm{s, 2, \ast}{w_2} $ such that 
  \begin{align} 
    & \norm{s, 2, \Delta}{v \circ {X_{\Delta t}^1} [f \circ w_1] - v \circ {X_{\Delta t}^1} [f \circ w_2]} \\ & \leq C_{\mathrm{P}8} \Delta t \norm{s + 1, \infty}{v} \norm{s, 2, \Delta}{w_1 - w_2}. \label{d22} 
  \end{align} 
\end{lemma}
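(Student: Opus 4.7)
The plan is to reduce Lemma~\ref{lem:d} to a product estimate in which the factor $\Delta t$ arises naturally from a one-parameter integral representation of the difference, and the two remaining factors are then controlled by Lemma~\ref{lem:c} (for the composition with the translation-like map) and Lemma~\ref{lem:a1} (for the Lipschitz-type difference of $f \circ w_1$ and $f \circ w_2$ measured in $H^s$).

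First, set $g_\theta = (1 - \theta) f \circ w_2 + \theta f \circ w_1$ and $X_\theta (x) = x - \Delta t \, g_\theta (x)$ for $\theta \in [0, 1]$, so that $X_0 = X_{\Delta t}^1 [f \circ w_2]$ and $X_1 = X_{\Delta t}^1 [f \circ w_1]$. The fundamental theorem of calculus applied to $\theta \mapsto v \circ X_\theta$ gives the key identity
\begin{align}
  v \circ X_{\Delta t}^1 [f \circ w_1] - v \circ X_{\Delta t}^1 [f \circ w_2] = - \Delta t \int_0^1 ((\partial_x v) \circ X_\theta) \, (f \circ w_1 - f \circ w_2) \, \mathrm{d} \theta,
\end{align}
which isolates the desired factor $\Delta t$. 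Taking $\norm{s, 2, \ast}{\, \cdot \,}$ and moving it under the integral via Minkowski's inequality reduces the task to estimating, uniformly in $\theta$, the $H^s$-norm of a product between a function in $W^{s, \infty}$, namely $(\partial_x v) \circ X_\theta$, and a function in $H^s$, namely $f \circ w_1 - f \circ w_2$. I would then invoke the standard Leibniz rule together with \eqref{a23-emb} to bound the $H^s$-seminorm of this product by a multiple of $\norm{s, \infty}{(\partial_x v) \circ X_\theta} \norm{s, 2, \ast}{f \circ w_1 - f \circ w_2}$.

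Next, each factor is bounded using the preceding auxiliary results. For $(\partial_x v) \circ X_\theta$, the argument of Lemma~\ref{lem:c} applies essentially verbatim with $f \circ w$ replaced by $g_\theta$: a Faà di Bruno expansion along with the Bell-polynomial bounds \eqref{c11}--\eqref{c12} for $\mathcal{B}_{s, k} (X_\theta)$ and the lower bound on $\partial_x X_\theta$ together yield $\norm{s, \infty}{(\partial_x v) \circ X_\theta} \leq C_{\mathrm{P}} (\norm{s, \infty}{f}, \norm{s, 2, \ast}{w_1}, \norm{s, 2, \ast}{w_2}) \norm{s + 1, \infty}{v}$. For the second factor, the second half of Lemma~\ref{lem:a1} directly produces the bound $\norm{s, 2, \ast}{f \circ w_1 - f \circ w_2} \leq C_{\mathrm{P}4} \norm{s + 1, \infty}{f} \norm{s, 2, \ast}{w_1 - w_2}$. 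Combining these yields \eqref{d4}. The weighted estimate \eqref{d22} is obtained by the same template, replacing the $\norm{s, 2, \ast}{\, \cdot \,}$ on the second factor by $\norm{s, 2, \Delta}{\, \cdot \,}$ and invoking the balance \eqref{c10} to absorb the extra $h^{2s} / \Delta t$ weight that appears when the top derivative falls on $f \circ w_1 - f \circ w_2$ in the Leibniz expansion.

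The main obstacle is that $X_\theta$ is associated with the composite velocity $g_\theta$ rather than a single $f \circ w$, so Lemma~\ref{lem:c} does not apply directly as stated. The cleanest way around this is to observe that every step in the proof of Lemma~\ref{lem:c} uses the velocity field only through its $W^{1, \infty}$- and $H^s$-type norms, both of which are uniformly controlled for $g_\theta$ by $\max \{\norm{s, 2, \ast}{f \circ w_1}, \norm{s, 2, \ast}{f \circ w_2}\}$ thanks to Lemma~\ref{lem:a1}. Once this uniform-in-$\theta$ control is secured, the rest is a direct Faà di Bruno bookkeeping exercise wholly parallel to the proofs of Lemmas~\ref{lem:s2} and \ref{lem:c}, and the constants can be absorbed into a single $C_{\mathrm{P}7}$ (respectively $C_{\mathrm{P}8}$) of the required polynomial type.
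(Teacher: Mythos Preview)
Your route via the homotopy $X_\theta$ and the integral representation is genuinely different from the paper's argument, which is considerably shorter: the paper simply applies the Lipschitz estimate \eqref{ap2} of Lemma~\ref{lem:a1} twice, first with outer function $v$ and inner functions $X_{\Delta t}^1[f\circ w_i]$ (producing the factor $\norm{s+1,\infty}{v}\,\norm{s,2,\ast}{X_{\Delta t}^1[f\circ w_1]-X_{\Delta t}^1[f\circ w_2]}$), and then with outer function $f$ and inner functions $w_i$ to convert $\norm{s,2,\ast}{f\circ w_1-f\circ w_2}$ into $\norm{s,2,\ast}{w_1-w_2}$. The factor $\Delta t$ appears purely algebraically from $X_{\Delta t}^1[f\circ w_1]-X_{\Delta t}^1[f\circ w_2]=-\Delta t\,(f\circ w_1-f\circ w_2)$. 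For \eqref{d22} the paper reruns the same two-step pattern using \eqref{ap3}, \eqref{ap7} and \eqref{c10}; no integral representation or product estimate is used.

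Your argument contains a concrete gap: the bound $\norm{s,\infty}{(\partial_x v)\circ X_\theta}\le C_{\mathrm P}(\dots)\,\norm{s+1,\infty}{v}$ is not available under the stated hypotheses. In the Fa\`a di Bruno expansion of $\partial_x^s\bigl((\partial_x v)\circ X_\theta\bigr)$ the $k=1$ term equals $((\partial_x^2 v)\circ X_\theta)\,\mathcal{B}_{s,1}(X_\theta)=-\Delta t\,((\partial_x^2 v)\circ X_\theta)\,\partial_x^s g_\theta$, and $\partial_x^s g_\theta$ contains $(f'\circ w_i)\,\partial_x^s w_i$, which lies only in $L^2$ since $w_i\in H^s(\mathbb{T})$. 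The estimates \eqref{c11}--\eqref{c12} you cite cover only $k\ge 2$; for $k=1$ the paper uses the $L^2$ bound \eqref{c13}. The repair is easy but must be made explicit: in the Leibniz expansion of $\bigl((\partial_x v)\circ X_\theta\bigr)(f\circ w_1-f\circ w_2)$, handle the term where all $s$ derivatives fall on the first factor by pairing $\norm{0,2}{\partial_x^s((\partial_x v)\circ X_\theta)}$ with $\norm{0,\infty}{f\circ w_1-f\circ w_2}$, and in the weighted case absorb the resulting $\sqrt{h^{2s}/\Delta t}\,\norm{s,2,\ast}{\cdot}$ via \eqref{c10}. With that correction your scheme goes through, though it is noticeably more laborious than the paper's two-line reduction to Lemma~\ref{lem:a1}.
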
 

\begin{proof} 
  By \eqref{ap2}, we have 
  \begin{align} 
    & \norm{s, 2, \ast}{v \circ {X_{\Delta t}^1} [f \circ w_1] - v \circ {X_{\Delta t}^1} [f \circ w_2]} \\ & \leq C_{\mathrm{P}} (\norm{s - 1, 2}{\partial_x {X_{\Delta t}^1} [f \circ w_1]}, \norm{s - 1, 2}{\partial_x {X_{\Delta t}^1} [f \circ w_1]}) \\ & \qquad \cdot \norm{s + 1, \infty}{v} \norm{s, 2, \ast}{{X_{\Delta t}^1} [f \circ w_1] - {X_{\Delta t}^1} [f \circ w_2]}. \label{d1}
  \end{align} 
  The right-hand side can be estimated as follows. By \eqref{ap1}, we have for $ i = 1, 2 $ 
  \begin{align} 
    \norm{s - 1, 2}{\partial_x {X_{\Delta t}^1} [f \circ w_i]} & = \norm{s - 1, 2}{1 + \partial_x (f \circ w_i) \Delta t} \\ & \leq 1 + C_{\mathrm{P}} (\norm{s, 2}{w_i}) \norm{s, \infty}{f}. \label{d2} 
  \end{align}
  Also, by \eqref{ap2}, we have 
  \begin{align}
    & \norm{s, 2, \ast}{{X_{\Delta t}^1} [f \circ w_1] - {X_{\Delta t}^1} [f \circ w_2]} \\ & = \Delta t \norm{s, 2, \ast}{f \circ w_1 - f \circ w_2} \\ 
    & \leq C_{\mathrm{P}} (\norm{s - 1, 2}{\partial_x w_1}, \norm{s - 1, 2}{\partial_x w_2}) \Delta t \norm{s + 1, \infty}{f} \norm{s, 2, \ast}{w_1 -  w_2}. \label{d3} 
  \end{align} 
  Applying \eqref{d2} and \eqref{d3} to \eqref{d1}, we obtain \eqref{d4}. 

  Next, we prove \eqref{d22}. By \eqref{ap7} and \eqref{ap3}, we have for any $ \phi_1 $, $ \phi_2 \in H^s (\mathbb{T}) $ and $ g \in W^{s + 1, \infty} (\mathbb{T}) $ 
  \begin{align} 
    \norm{s, 2, \Delta}{g \circ \phi_1 - g \circ \phi_2}^2 & = \norm{0, 2}{g \circ \phi_1 - g \circ \phi_2}^2 + \frac{h^{2s}}{\Delta t} \snorm{s, 2}{g \circ \phi_1 - g \circ \phi_2}^2 \\ & \leq C_{\mathrm{P}} (\norm{s - 1, 2}{\partial_x \phi_1}, \norm{s - 1, 2}{\partial_x \phi_2}) \norm{s + 1, \infty}{g}^2 \\ & \qquad \cdot \left( \norm{0, 2}{\phi_1 - \phi_2}^2 + \frac{h^{2s}}{\Delta t} \norm{s, 2, \ast}{\phi_1 - \phi_2}^2 \right). 
  \end{align} 
  By \eqref{c10}, we have 
  \begin{align} 
    & \norm{s, 2, \Delta}{g \circ \phi_1 - g \circ \phi_2} \\ & \leq C_{\mathrm{P}} (\norm{s - 1, 2}{\partial_x \phi_1}, \norm{s - 1, 2}{\partial_x \phi_2}) \norm{s + 1, \infty}{g} \norm{s, 2, \Delta}{\phi_1 - \phi_2}. \label{d21} 
  \end{align} 
  Substituting $ \phi_i = {X_{\Delta t}^1} [f \circ w_i] $ ($ i = 1, 2 $) and $ g = v $ into \eqref{d21}, we have 
  \begin{align} 
    & \norm{s, 2, \Delta}{v \circ {X_{\Delta t}^1} [f \circ w_1] - v \circ {X_{\Delta t}^1} [f \circ w_2]} \\ 
    & \leq C_{\mathrm{P}} (\norm{s - 1, 2}{\partial_x {X_{\Delta t}^1} [f \circ w_1]}, \norm{s - 1, 2}{\partial_x {X_{\Delta t}^1} [f \circ w_2]}) \Delta t \\ & \qquad \cdot \norm{s + 1, \infty}{v} \norm{s, 2, \Delta}{f \circ w_1 - f \circ w_2}. \label{d23} 
  \end{align} 
  Substituting $ \phi_i = w_i $ ($ i = 1, 2 $) and $ g = f $ into \eqref{d21}, we also have 
  \begin{align} 
    & \norm{s, 2, \Delta}{f \circ w_1 - f \circ w_2} \\ & \leq C_{\mathrm{P}} (\norm{s - 1, 2}{\partial_x w_1}, \norm{s - 1, 2}{\partial_x w_2}) \norm{s + 1, \infty}{f} \norm{s, 2, \Delta}{w_1 - w_2}. \label{d24} 
  \end{align} 
   Applying \eqref{d2} and \eqref{d24} to \eqref{d23}, we obtain \eqref{d22}. 
\end{proof}


\begin{thebibliography}{10}

\bibitem{00AcGu}
Y.~Achdou and J.-L. Guermond, \emph{Convergence analysis of a finite element projection/{L}agrange-{G}alerkin method for the incompressible {N}avier-{S}tokes equations}, SIAM J. Numer. Anal. \textbf{37} (2000), 799--826.

\bibitem{64AhNiWa}
J.~H. Ahlberg, E.~N. Nilson, and J.~L. Walsh, \emph{Fundamental properties of generalized splines}, Proc. Nat. Acad. Sci. U.S.A. \textbf{52} (1964), 1412--1419.

\bibitem{25Fe}
R.~Ferretti, \emph{A fully semi-{L}agrangian technique for viscous and dispersive conservation laws}, J. Comput. Phys. \textbf{526} (2025), Paper No. 113784, 16.

\bibitem{24KaTa} 
T.~Kashiwabara and H.~Takemura, \emph{Error estimates of the cubic interpolated pseudo-particle scheme for one-dimensional advection equations}, Preprint arXiv: 2402.11885v2 [math.NA].

\bibitem{02Mi}
G.~N. Milstein, \emph{The probability approach to numerical solution of nonlinear parabolic equations}, Numer. Methods Partial Differential Equations \textbf{18} (2002), 490--522.

\bibitem{00MiTr}
G.~N. Milstein and M.~V. Tretyakov, \emph{Numerical algorithms for semilinear parabolic equations with small parameter based on approximation of stochastic equations}, Math. Comp. \textbf{69} (2000), 237--267.

\bibitem{01MiTr}
\bysame, \emph{Numerical solution of the {D}irichlet problem for nonlinear parabolic equations by a probabilistic approach}, IMA J. Numer. Anal. \textbf{21} (2001), 887--917.

\bibitem{02NaTa}
M.~Nara and R.~Takaki, \emph{Stability analysis of the {CIP} scheme}, IEICE Trans. Fundamentals (Japanese Edition) \textbf{85} (2002), 950--953.

\bibitem{76Pu}
D.~K. Purnell, \emph{Solution of the advective equation by upstream interpolation with a cubic spline}, Mon. Weather Rev. \textbf{104} (1976), 42--48.

\bibitem{SV67}
M.~H. Schultz and R.~S. Varga, \emph{{$L$}-splines}, Numer. Math. \textbf{10} (1967), 345--369.

\bibitem{99SoRoBeGh}
E.~Sonnendr{\"u}cker, J.~Roche, P.~Bertrand, and A.~Ghizzo, \emph{The semi-{L}agrangian method for the numerical resolution of the {V}lasov equation}, J. Comput. Phys. \textbf{149} (1999), 201--220.

\bibitem{91StCo}
A.~Staniforth and J.~C{\^o}t{\'e}, \emph{Semi-{L}agrangian integration schemes for atmospheric models---{A} review}, Mon. Weather Rev. \textbf{119} (1991), 2206--2223.

\bibitem{85TaNiYa}
H.~Takewaki, A.~Nishiguchi, and T.~Yabe, \emph{Cubic interpolated pseudo-particle method ({CIP}) for solving hyperbolic-type equations}, J. Comput. Phys. \textbf{61} (1985), 261--268.

\bibitem{15TaFuNiIs}
D.~Tanaka, H.~Fujiwara, T.~Nishida, and Y.~Iso, \emph{Stability of the {CIP} scheme applied to linear advection equations (theory)}, Transactions of the Japan Society for Industrial and Applied Mathematics \textbf{25} (2015), 91--104, (in Japanese).

\bibitem{09ToOgYa}
K.~Toda, Y.~Ogata, and T.~Yabe, \emph{Multi-dimensional conservative semi-{L}agrangian method of characteristics {CIP} for the shallow water equations}, J. Comput. Phys. \textbf{228} (2009), 4917--4944.

\bibitem{91YaAo}
T.~Yabe and T.~Aoki, \emph{A universal solver for hyperbolic equations by cubic-polynomial interpolation. {I}. {O}ne-dimensional solver}, Comput. Phys. Comm. \textbf{66} (1991), 219--232.

\end{thebibliography}

\providecommand{\bysame}{\leavevmode\hbox to3em{\hrulefill}\thinspace}
\providecommand{\MR}{\relax\ifhmode\unskip\space\fi MR }
\providecommand{\MRhref}[2]{%
  \href{http://www.ams.org/mathscinet-getitem?mr=#1}{#2}
}
\providecommand{\href}[2]{#2}

\end{document}